\DeclareSymbolFont{bbold}{U}{bbold}{m}{n}
\DeclareSymbolFontAlphabet{\mathbbold}{bbold}
\newtheorem{thm}{Theorem}[section]
\newtheorem{cor}[thm]{Corollary}
\newtheorem{lem}[thm]{Lemma}
\newtheorem{prop}[thm]{Proposition}
\newtheorem*{mainthm}{Main Theorem}
\theoremstyle{definition}
\newtheorem{dfn}[thm]{Definition}
\theoremstyle{remark}
\newtheorem{rem}[thm]{Remark}
\newtheorem{ex}[thm]{Example}
\numberwithin{equation}{section}
\def\C{\mathbb{C}}
\def\Q{\mathbb{Q}}
\def\R{\mathbb{R}}
\def\Z{\mathbb{Z}}
\def\1{\mathbbold{1}}
        \def\Bc{\mathcal{B}}
\def\Jc{\mathcal{J}}           \def\Kc{\mathcal{K}}        
\def\Pc{\mathcal{P}}
\def\Vc{\mathcal{V}}
\def\Bf{\mathfrak{B}}
\def\Sf{\mathfrak{S}}
\def\Gs{\mathsf{G}}
\def\Ts{\mathsf{T}}
\def\al{\alpha}
\def\be{\beta}
\def\eps{\varepsilon}
\def\la{\lambda}
\def\ta{\theta}
\def\d{\partial}
\def\0{\varnothing}
\def\sm{\setminus}
\def\ol{\overline}
\def\le{\leqslant}
\def\ge{\geqslant}
\def\<{\langle}
\def\>{\rangle}
\def\dom{\mathrm{dom}} 
\def\area{\mathrm{area}}
\def\Inv{\mathrm{Inv}}
\def\sign{\mathrm{sign}}
\def\rk{\mathrm{rk}}
\def\PET{\mathrm{PET}}
\def\IET{\mathrm{IET}}
\def\lev{\mathrm{lev}}
\def\cent{\mathrm{cent}}
\def\imp{\mathrm{imp}}
\begin{document}

\title{Aperiodic points for outer billiards}

\author[A. Belyi]{Anton Belyi}

\author[A. Kanel-Belov]{Alexei Kanel-Belov}

\author[Ph. Rukhovich]{Philipp Rukhovich}

\author[V.~Timorin]{Vladlen Timorin}

\address[Alexei Kanel-Belov]
{Bar--Ilan University, Building 216, 5290002 Ramat-Gan, Israel}

\address[Anton Belyi and Alexei Kanel-Belov and Philipp Rukhovich]
{Moscow Center for Advanced Studies, Kulakova str. 20, Moscow 123592, Russia}

\address[Vladlen~Timorin]
{Faculty of Mathematics, HSE University, 6 Usacheva ul., Moscow, 119048, Russia}

\email{vtimorin@hse.ru}

\thanks{The second named author has been partially supported by Israel Science Foundation [1623/16].
The fourth named author has been partially supported by the HSE University Basic Research Program.
}

\dedicatory{Dedicated to the memory of V.Z. Grines}

\begin{abstract}
Euclidean outer billiard on a regular polygon (that is not a triangle, square or a hexagon) has aperiodic points, i.e.,
 points where all iterates of the outer billiard map are defined and yield pairwise distinct images.
This result answers a question of R. Schwartz posed at ICM 2022.
\end{abstract}

\maketitle

\section{Introduction}
Fix an orientation of the real affine plane $\R^2$.
Then, the two half-planes bounded by a given oriented line can be uniquely marked as
 the \emph{left} and the \emph{right} half-planes, that is,
 as lying on the left and, respectively, on the right of the line.
If $a$ and $b$ are two different points in the plane, then \emph{the line $ab$}
 will mean the line passing through $a$, $b$ and oriented from $a$ to $b$.
Say that a point lies \emph{on the left} (resp., \emph{on the right}) of the line $ab$
 if it lies in the left (resp., right) half-plane bounded by $ab$.
\begin{figure}
  \centering
  \includegraphics[height=.45\textwidth]{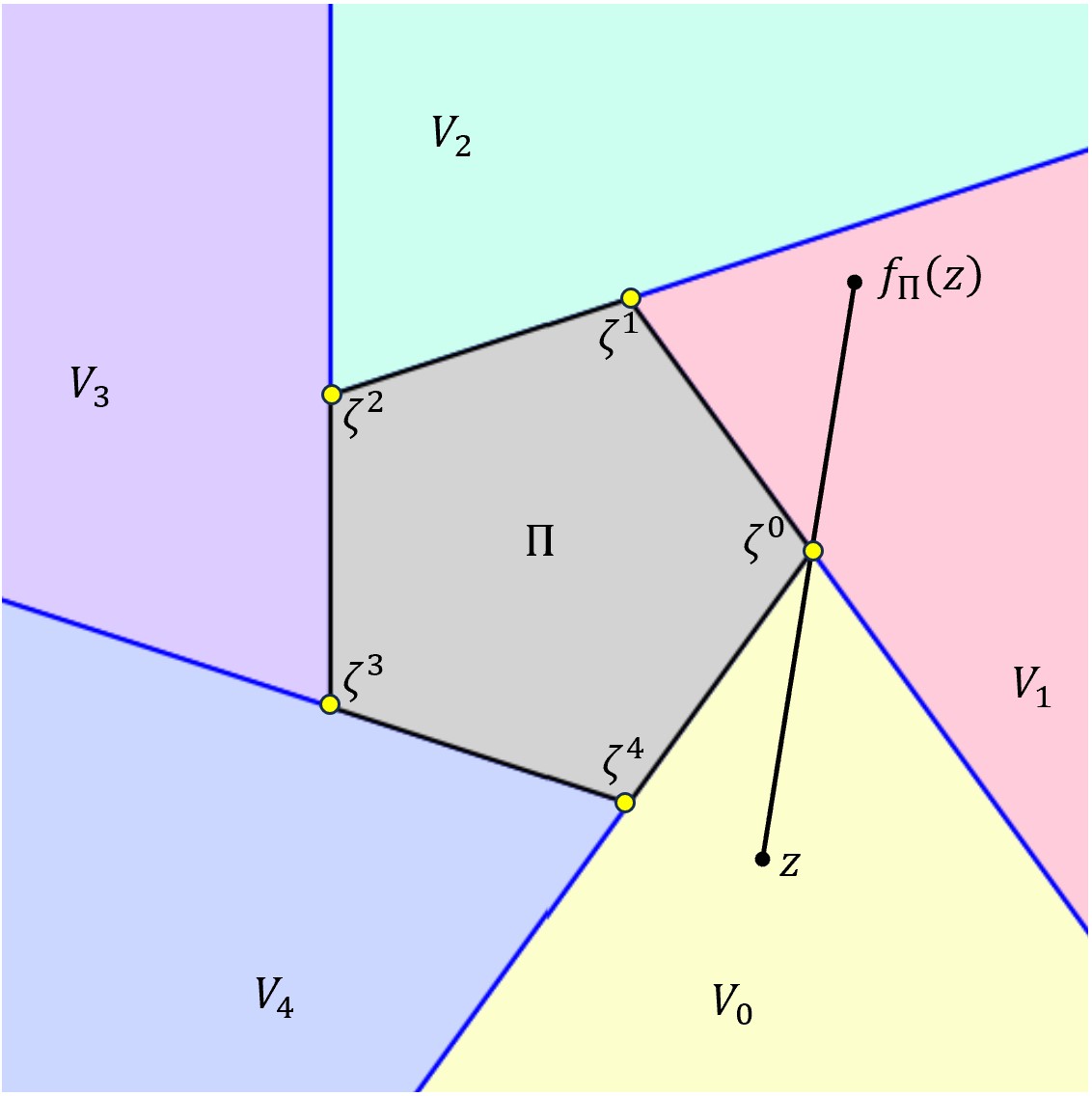}
  \includegraphics[height=.45\textwidth]{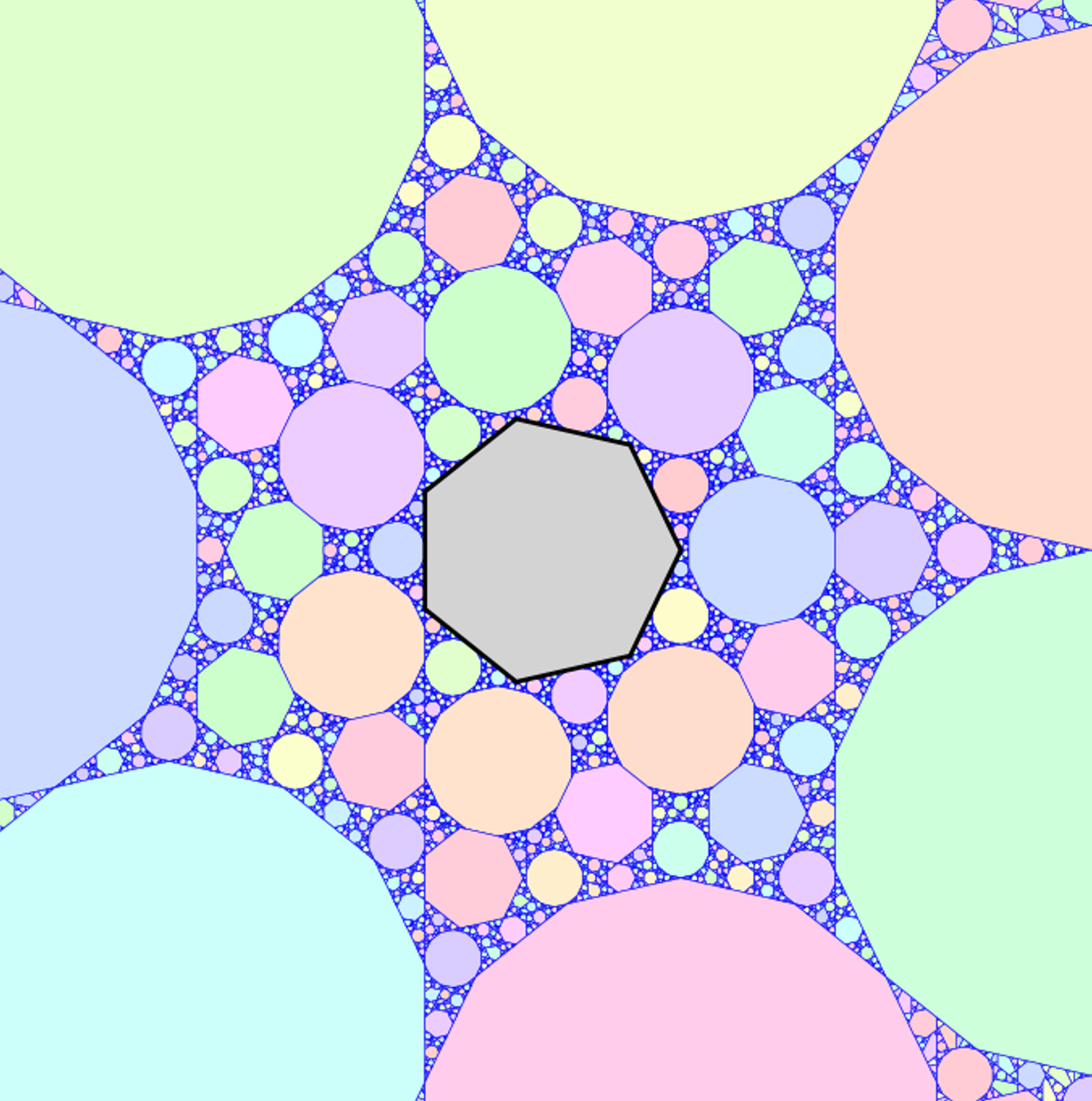}
  \caption{\small Left: outer billiard map $f=f_\Pi$ on a regular pentagon $\Pi$.
  Here, the vertices of $\Pi$ are the roots of unity $\zeta^k$, where $\zeta=\exp(2\pi \mathbf{i}/5)$.
  The domain $\dom(f)$ of $f$ is the union of the 5 sectors $V_k$,
   and the map $f_\Pi$ acts on $V_k$ as the half-turn about $\zeta^k$.
   Right: some periodic domains for the outer billiard on a regular 7-gon.}\label{fig:dual}
\end{figure}
Consider an open convex polygon $\Pi$ in $\R^2$.
The \emph{right (resp., left) outer billiard} on $\Pi$
 is the map $f_\Pi$ defined for points $z\in\R^2$
 such that, for some vertex $a$ of $\Pi$, all other vertices of $\Pi$ lie on the left
 (resp., right) of $za$.
For such points $z$, one sets $f_\Pi(z)=2a-z$, that is,
 $f_\Pi$ acts on $z$ as the central reflection (=a half-turn) about the vertex $a$.
It is clear that $f_\Pi$ is defined on $\R^2\sm\ol\Pi$, except on
 finitely many rays, each emanating from a vertex of $\Pi$
 and lying on a support line of $\Pi$.
Here, $\ol\Pi$ denotes the closure of $\Pi$ in $\R^2$.
We are concerned with outer billiards on regular $N$-gons with integer $N>2$.
In the sequel, $f_\Pi$ will stand for the right outer billiard on $\Pi$.
See Fig. \ref{fig:dual} for an illustration.

Recall that a point $z\in\R^2$ is called \emph{aperiodic} for $f_\Pi$ if all iterates $f^n_\Pi(z)$
 are defined for $z$ (where $n$ runs through all integers, positive or negative) and are all pairwise distinct.
Our main result is the following.

\begin{mainthm}
Outer billiard on any regular $N$-gon with $N>4$ and $N\ne 6$ has aperiodic points.
\end{mainthm}

The Main Theorem is proved by rather general methods,
 which may be applicable to wider classes of piecewise isometries.

\subsection{Overview and related works}
\label{ss:overview}
More generally, outer billiard can be defined for any compact convex figure in the plane
 (see \cite{Tab93,DT05,tab05} for overviews).
First definition of outer billiard maps may be due to M. Day \cite{Day47};
 it reappeared in B. Neumann \cite{Neu59}.
J. Moser \cite{Mos78} suggested outer billiard as a crude model of planetary motion: a simplification,
 in which the stability problem still has subtleties of the original context.
Dynamical stability of outer billiard means that all orbits are bounded.
This is the case when the figure is strictly convex and has sufficiently smooth boundary \cite{mos01,dou82}.
It is also the case \cite{VSh87,Kol89,GS92} for the so called \emph{quasi-rational polygons}, which include all
 lattice polygons as well as all regular polygons.
See also the thesis of D. Genin \cite{Gen05} for the boundedness of orbits in a family of trapezoidal outer billiards.
However, as was discovered in \cite{Sch07}, outer billiards on convex quadrilaterals
 can have unbounded orbits (\cite{Sch09,Sch19} study these orbits in more detail).

Polygonal outer billiards are arguably the principal examples of Euclidean piecewise rotations
 (see \cite{Goe00,Goe03} for overviews of piecewise isometries, including piecewise rotations).
Piecewise rotations are a natural generalization of interval exchange transformations.
They also found applications in electrical engineering, in particular, in the study of digital filters
 \cite{Ash96,ACP97,CL88,CL90,Dav92,KWC96,Ogo92}.
In \cite{YFW10}, a sufficient condition for the existence of periodic points is given for a general class of
 piecewise isometries.

Previous results concerning outer billiards on regular polygons are as follows.
Apart from “trivial” (or “integrable”) cases of regular $N$-gons for $N=3,4,6$
 (these ``trivial'' cases are characterised by the property that the group of isometries generated by
 the half-turns about all the vertices acts properly discontinuously on the plane),
 all understood cases
 possess dynamical \emph{self-similarities},
 i.e., Euclidean similarities that conjugate the first return map of some polygon with the first return map of a strictly smaller polygon.
S. Tabachnikov \cite{Tab95} described a self-similarity for $N=5$ and its implications
 for orbit coding and the Hausdorff dimension of the Julia set,
 which eventually led to a precise symbolic model in this case
 (Bedaride-Cassaigne \cite{BC11}).
G.H. Hughes \cite{Hug12,Hug13,Hug16,Hug21} has a number of computer assisted experimental conjectures
 concerning the values of $N$ up to about 20.
Cases $N=8, 10, 12$ are rigorously covered by
 \cite{BC11,Ruk22}.
Previous results of R. Schwartz \cite{Sch14} embedded the case $N=8$ into a continuous family with
 an action of a suitably defined renormalization operator.
Yet earlier, in \cite{AKT01}, a similar piecewise rotation of the torus was studied.
Cases $N=7$, $9$ display multi-fractal structure (independent self-similarities leading to uncountable families of
 pairwise disjoint closed invariant subsets of aperiodic points), which may be suggested by experimental data of \cite{Hug13};
 a computer aided proof of these phenomena is a work in progress by V. Zgursky and the authors of this paper.

In the survey article \cite{Sch21} based on his ICM 2022 address, Schwartz notes:
\begin{enumerate}
\item[]
``\emph{The cases $n = 8$, $10$, $12$ also have a self-similar structure. Without having a reference,
I have the sense that the case $n = 7$ is somewhat understood in the sense that there are some
regions of renormalization. I think that the cases $n = 9$, $11$ are not understood at all. G.
Hughes [63] has made beautiful and detailed pictures of outer billiards on regular polygons.
These pictures (and earlier ones) suggest }

\textbf{Conjecture 6.5.} Outer billiards on the regular $n$-gon has an aperiodic orbit if $n \ne 3$, $4$, $6$.

\emph{I think that this is not known aside from $n = 5$, $8$, $10$, $12$, and perhaps $n = 7$.}''
\end{enumerate}
This paper settles the conjecture.
Previously, E. Gutkin and N. Simanyi \cite{GS92} posed a more general question:
``\emph{Let $\Pi$ be a quasi-rational but not rational polygon.
 We expect that the outer billiard about $\Pi$ to have nonperiodic orbits.}''
 Methods of this paper may also shed some light on this question but this is a plan for the future.

\subsection{Methods}
\label{ss:method}
Even though the outer billiard map itself is an example of a piecewise rotation,
 the domain of this piecewise rotation is unbounded, which makes things more complicated.
A remedy to this complication has been known for a long time:
 there are bounded regions invariant under $f_\Pi$;
 the restrictions of $f_\Pi$ to these regions
 are bounded piecewise rotations, some of which are easy to describe.
Dynamics of $f_\Pi$ on a bounded invariant polygon can be easily
 reduced to that of a \emph{polygon exchange transformation},
 i.e., an invertible piecewise isometry acting by parallel translations on all components of its domain.
Next, the main issue when looking for aperiodic points of
 a polygon exchange transformation of a bounded polygon is
 to rule out full periodicity of the map on the complement of finitely many lines.

For this problem --- and here comes the principal novelty of this paper --- we employ
 \emph{scissors congruence invariants}.
Two polygons are \emph{scissors congruent} if one can cut one polygon
 into finitely many polygonal pieces and assemble the other polygon from these pieces.
If $\Gs$ is a subgroup of $\mathrm{Isom}(\R^2)$, the full isometry group of $\R^2$,
 whose elements are the only ones allowed to act on the pieces, then we speak
 of $\Gs$-scissors congruence.
A case of special importance for us is $\Gs=\Ts$: the group of all translations.
By ($\Gs$-)\emph{scissors congruence invariants}, we mean functions on
 polygons that take the same values on $\Gs$-scissors congruent polygons.
Dynamical versions of $\Ts$-scissors congruence invariants are
 conjugacy invariants of polygon exchange transformations.\footnote{Polygon
 exchange transformations are themselves scissors congruences,
 and dynamic invariants are, in a sense, scissors congruence invariants of these
 scissors congruences.}

Such invariants, in principle, can lead to very general necessary conditions of periodicity,
 in the form of vanishing of these invariants.
A sample condition of this type follows from Theorem \ref{t:additiv}:
 for a periodic polygon exchange transformation, any dynamic invariant
 (as described in Section \ref{ss:val}) must vanish.
The principle idea of the Main Theorem is simple: compute suitable
 dynamic invariants and show they are non-zero.
Once it is established that a bounded restriction of $f_\Pi$ is not periodic,
 establishing the existence of aperiodic points is not immediate,
 but this part is less conceptual and more technical (Sections \ref{s:Jc} and \ref{s:red1D}).

Using earlier results (cf. \cite{Ruk22}), the Main Theorem can be reduced to the case of even $N$:
 for an odd $N$, there is a certain correspondence between outer billiards on the $N$-gon
 and on the $2N$-gon, allowing to conclude that the existence of aperiodic points in one
 of these systems implies the same for the other.
Thus, it suffices to assume that $N$ is even.
We also need to reduce the dynamics of $f_\Pi$ to that of a polygon exchange transformation,
 i.e., a piecewise isometry acting by translations.
One obvious choice is the second iterate $f^{2}_\Pi$, however, a different choice allows to significantly simplify the computations.
Namely, consider the half-turn $M$ about the center of $\Pi$, and form the composition $g:=M\circ f_\Pi$.
It acts as a piecewise translation, see Fig. \ref{fig:gX}.
Also, an $f_\Pi$-invariant bounded domain is invariant under $M$ and hence under $g$,
 and the dynamics of $g$ is not much different from that of $f_\Pi$, since $f_\Pi$ commutes with $M$.
For these reasons, if $g$ is not periodic, then neither is a bounded restriction of $f_\Pi$.
To establish non-periodicity of $g$,
 we use a two-dimensional dynamic invariant based on the Hadwiger--Glur valuation \cite{HG51}.
Explicit computation of this invariant and verification that the result is nonzero are not hard.

\subsection{Organization of the paper}
Section \ref{s:PWI} discusses general concepts associated with scissors congruences,
 including their dynamical aspects.
As a special case, we consider polygon exchange transformations,
 which act as piecewise translations.
\emph{Dynamic invariants} of polygon exchange transformations
 of a given polygon
 are certain homomorphisms of the group of such transformations to
 the additive group of a real vector space.
It follows that these dynamic invariants are conjugacy invariants under orientation preserving
 piecewise Euclidean conjugacies,
 and, for this reason, they are also called dynamic invariants.
\emph{Dynamic Hadwiger invariants} can be obtained as special
 cases of more general dynamic invariants; they are based
 on (translation) scissors congruence invariants of Hadwiger and Glur \cite{HG51}
 initially discovered as a means of solving translation scissors congruence
 problem for planar polygons.

Assuming that $N$ is even, Section \ref{s:bnded} deals with the outer billiard map $f_\Pi$
 on a regular $N$-gon $\Pi$.
There is a bounded domain $\Upsilon$ invariant under $f_\Pi$; see Section \ref{ss:neck}.
As explained above, the map $g_\Upsilon:=M\circ f_\Pi|_\Upsilon$ is a polygon
 exchange transformation of a bounded domain.
Here $M$ is the half-turn about the center of $\Pi$.
Computing the dynamic Hadwiger invariant of $g_\Upsilon$ (Theorem \ref{t:Inv-gX}) and establishing that it is nonzero
 yields that $f_\Pi$ is not periodic on $\Upsilon$ (Theorem \ref{t:fX-nonper}).
We still need to upgrade this result to existence of aperiodic points, which is done in Sections \ref{s:Jc} -- \ref{s:red1D}.
By way of contradiction, we assume that $f_\Pi$ has no aperiodic points.

Section \ref{s:Jc} describes a symbolic model for $f_\Pi$ and more general polygon exchange transformations.
General facts from topological dynamics are used
 to extract an invariant subset $\Kc$ of the symbolic model that is perfect (hence uncountable) and contained in
 a finite graph consisting of boundary points.
Note that the existence of $\Kc$ is based on the assumption that $f_\Pi$ has no aperiodic points;
 for this reason, we cannot give specific examples of such $\Kc$s for outer billiards.
The Main Theorem reduces to certain properties of $\Kc$, see Theorem \ref{t:rem-core},
 and the latter are discussed in Section \ref{s:red1D}.
Namely, the dynamics on $\Kc$ is basically that of an interval exchange transformation,
 which yields an invariant subpolygon with zero dynamic invariants.
Removing this subpolygon allows to perform an induction step and, ultimately, arrive at a contradiction.

Possible directions for future research are discussed in Section \ref{s:sum}.

\subsection{Acknowledgements}
Valery Zgursky was a member of the team at an earlier stage of the project
 and co-authored a preliminary version of the paper.
We are grateful to Victor Kleptsyn and Alexey Semenov for fruitful discussions,
 and to Alexandra Skripchenko for consultations on the topic of interval exchange transformations.
Finally, we thank the referees for careful reading and many useful suggestions that
 greatly improved the exposition.

\section{Dynamics of scissors congruences}
\label{s:PWI}
In this section, we introduce terminology related to scissors congruences and, specifically,
 polygon exchange transformations, which are two dimensional generalizations of interval exchange maps.

\subsection{Basic terminology}
\label{ss:PR}
Given a compact polygon $P$ in the plane\footnote{Here, $P$, $Q$, etc., denote generic polygons,
 while $\Pi$ is reserved for the outer billiard table, which is almost always assumed to be
 a regular $N$-gon.}, not necessarily convex, even not necessarily connected,
 one can cut it into finitely many polygonal pieces and form a new polygon $Q$ from these pieces.
In this case, we talk about a \emph{scissors congruence} between $P$ and $Q$
 and call the polygons $P$ and $Q$ \emph{scissors congruent}.
A more precise definition specifies a precise meaning of ``cutting into pieces''
 as well as describes what kind of motions are allowed on the pieces.

\begin{dfn}[Scissors congruence]
\label{d:scicong}
Fix a subgroup $\Gs$ in the full group of Euclidean isometries of the plane.
Also, consider two open bounded polygons $P$ and $Q$ in the plane, not necessarily convex
 and not necessarily connected.
Suppose that
$$
\ol P=\ol{P_1\cup\dots\cup P_n},\quad \ol Q=\ol{Q_1\cup\dots \cup Q_n},
$$
 where $P_i$ and $Q_i$ are open convex polygons such that $g_i P_i=Q_i$ for some $g_i\in\Gs$,
 moreover, $P_i\cap P_j=Q_i\cap Q_j=\0$ for $i\ne j$.
In this case, $P$ and $Q$ are said to be \emph{$\Gs$-scissors congruent}.
The map $f$ defined on $P_1\cup\dots\cup P_n$ and acting as $g_i$ on each $P_i$
 is called the corresponding ($\Gs$-)\emph{scissors congruence}.
\end{dfn}

A scissors congruence $f$ between $P$ and $Q$ is therefore a partially defined map;
 write $\dom(f)$ for the \emph{domain} of $f$, that is, for the union of $P_i$s.
Clearly, the inverse map $f^{-1}$ is defined on $Q_1\cup\dots\cup Q_n$; it is a
 scissors congruence between $Q$ and $P$.
The domain of $f^{-1}$ coincides with the image of $f$, that is $\dom(f^{-1})=f(\dom(f))$.
Now assume that $P=Q$; in this case, $f$ generates a dynamical system.
Given a positive integer $n$, write $f^n$ for the $n$-th iterate of $f$ and $f^{-n}$ for the $n$-th iterate of $f^{-1}$.
Setting also $f^0=id$ makes iterates $f^n$ well defined scissors congruences for all integer values of $n$.
Note that the domain of $f^{n+1}$, for $n\ge 0$,
 can be obtained as $f^{-n}(\dom(f))$; usually, it is strictly smaller than $\dom(f^n)$ and
 is obtained from $\dom(f^n)$ by removing several straight line segments
 (those mapping to $P\sm\dom(f)$ under $f^n$).

\begin{rem}[Groupoid structure]
Recall that a \emph{groupoid} is by definition a small category, all of whose morphisms are isomorphisms.
There is a natural groupoid, the \emph{scissors congruence groupoid} of $\Gs$,
 whose objects are bounded polygons and whose arrows are
 classes of $\Gs$-scissors congruences under the following equivalence relation:
 $f_1\sim f_2$ if $f_1$ and $f_2$ become equal after a polygonal subdivision of their domains.
In any groupoid, all automorphisms of an object form a group.
Thus, it makes sense to talk about the group of all \emph{scissors automorphisms} of
 a given bounded polygon.
\end{rem}

Abusing the language, we will use the term ``scissors automorphism'' for both
 an automorphism in the scissors congruence groupoid, which is a class of maps,
 and a representative of this class.
Scissors automorphisms belong to a more general class of \emph{piecewise isometries}
 discussed, e.g., in \cite{Goe00}.
We adopt standard terminology and notation related to partially defined maps.
In particular, for a partially defined map $f$ of a set $P$ and a subset $A\subset P$,
 the \emph{image} $f(A)$ is defined as $f(A\cap\dom(f))$.
Given a scissors congruence of a polygon $P$ and a subpolygon $A\subset P$,
 say that $A$ is \emph{invariant} under $f$ if $f(A)\subset \ol A$;
 then automatically $f^{-1}(A)\subset \ol A$ since the area is preserved.

\begin{rem}[Connection with outer billiards]
  \label{r:outer}
Consider the outer billiard map $f_\Pi$ associated with a convex polygon $\Pi\subset\R^2$.
We claim that $f_\Pi$ is a \emph{piecewise rotation} of $\R^2\sm\ol\Pi$, that is,
 $f$ acts on each component of $\R^2\sm\ol\Pi$ as an orientation preserving
 Euclidean isometry.
Indeed, label the vertices of $\Pi$ as $\zeta^0$, $\dots$, $\zeta^{N-1}$, and let $V_i$
 be the locus of all points $z\in\R^2\sm\ol\Pi$ such that $f_\Pi$ acts on $z$ as the reflection in $\zeta^i$.
Then $V_i$ are convex cones (=sectors) bounded by continuations of the edges of $\Pi$.
On each $V_i$, the outer billiard map acts as the rotation by $\pi$, that is, as a half-turn, with center $\zeta^i$.
Note that the domain of $f$ is the union of all $V_i$, that is,
 the complement in $\R^2\sm \ol\Pi$ of $N$ straight line rays emanating from the vertices of $\Pi$.
See Fig. \ref{fig:dual} for the case when $\Pi$ is a regular polygon.
A way of obtaining a scissors congruence out of $f_\Pi$ is restricting the map $f_\Pi$
 to a suitably chosen invariant bounded polygon; this is indeed possible, as we will see later.
\end{rem}

\begin{dfn}[Boundary, periodic, aperiodic points]
\label{d:aper}
Given a scissors automorphism $f$ of $\Delta$,
 define a \emph{boundary point} of $f$ as a point $z\in\ol\Delta$, for which
there is $n\in\Z$ such that $z\notin \dom(f^n)$, that is, some positive or negative iterate of $f$
 is not defined at $z$ (which includes the case $z\in\d\Delta$).
If $f^n(z)=z$ for some $n\ne 0$ (which means, in particular, that $z\in\dom(f^n)$), then the
 point $z$ is called \emph{periodic} under $f$.
The smallest positive integer $n$ with this property is called the \emph{period} of $z$.
It is immediate from the definition that periodic points are not boundary points.
\emph{Aperiodic} points are defined as points of $\Delta$ that are neither periodic points nor boundary points.
\end{dfn}

See \cite{Goe03} for an overview of some results and open problems
 in dynamics of piecewise isometries and, in particular, in dynamics
 of scissors automorphisms.

\begin{dfn}[Periodicity]
  \label{d:per}
  A scissors automorphism $f$ of $\Delta$ is called \emph{periodic} if $f^n=id_{\dom(f^n)}$ for some integer $n>0$.
The smallest such $n$ is called the \emph{period} of $f$.
Equivalently, $f$ is periodic of period $n$ if it represents an order $n$ element
 in the group of scissors automorphisms of $\Delta$.
\end{dfn}

Let $\Ts$ be the group of all translations of the plane; $\Ts$-scissors automorphisms
 are also known as  \emph{translation scissors congruences} or as \emph{polygon exchange transformations}.
The group of all polygon exchange transformations of a polygon $\Delta$ is denoted by $\PET(\Delta)$.
Polygon exchange transformations admit a straightforward generalization
 to arbitrary dimensions, where one considers \emph{polytope exchange transformations}.
Dimension one polytope exchange transformations are called \emph{interval exchange transformations};
 the group of all interval exchange transformations of an interval $I$ is denoted by $\IET(I)$.
Periodic polygon exchange transformations yield, in particular, periodic elements of the group $\PET(\Delta)$.

\subsection{Sah--Arnoux--Fathi invariants}
\label{ss:SAF}
We proceed by reviewing an invariant of an interval exchange transformation called
 the \emph{Sah--Arnoux--Fathi invariant} (abbreviated as \emph{SAF invariant}) \cite{Arn81,Sah81}.
Consider an interval exchange transformation $g$ defined on an interval $I$.
Then the SAF invariant $\Inv(g)$ is an element of $\R\otimes_\Q\R$ defined as follows.
If subintervals $I_1$, $\dots$, $I_n$ of lengths $\lambda_1$, $\dots$, $\lambda_n$, respectively,
 form a partition of $I$, and the restriction of $g$ to each $I_i$ is a translation by $\tau_i\in\R$, then
$$
\Inv(g)=\lambda_1\otimes \tau_1+\lambda_2\otimes \tau_2+\dots + \lambda_n\otimes \tau_n.
$$

The following is a summary of some (classical) important properties of the SAF invariant
 (a nice and comprehensive exposition of these results is contained in \cite{Vor17}).
Every periodic interval exchange transformation has zero SAF invariant.
Also, every interval swap map --- an interval exchange transformation that
 interchanges two intervals with disjoint interiors and otherwise acts as the identity --- has zero SAF invariant.
The map $g\mapsto \Inv(g)$ is a homomorphism from the group of all interval exchange
 transformations of a given interval to the additive group of $\R\otimes_\Q\R$;
 the image of this homomorphism lies in $\R\wedge_\Q\R$.

The map $\Inv$ being a group homomorphism means that
$$
\Inv(f\circ g)=\Inv(f)+\Inv(g)
$$
for any two interval exchange transformations $f$, $g$ of the same interval.
It follows immediately that $\Inv(f^n)=n\Inv(f)$, hence $f$ being periodic implies that $\Inv(f)=0$.
Also, it follows that $\Inv(g\circ f\circ g^{-1})=\Inv(f)$, that is, $\Inv$ is a conjugacy invariant.

Recall that $\R\otimes_\Q\R$ is a vector space over $\R$ whose basis over $\R$ can
 be identified with a basis of $\R$ over $\Q$.
More precisely: choosing any basis $\Bf$ of $\R$ over $\Q$ yields an identification
 of $\R\otimes_\Q \R$ with the real vector space freely generated by $\Bf$.
Thus, $\R\otimes_\Q \R$ and $\R\wedge_\Q\R$ are infinite dimensional real vector spaces.
Any invariant with values in these spaces can be alternatively viewed as
 an infinite collection of real-valued invariants.
For example, choosing any $\Q$-bilinear map $\beta:\R\times\R\to\R$ yields the
 corresponding invariant
$$
\Inv_\beta(g):=\beta(\lambda_1,\tau_1)+\beta(\lambda_2,\tau_2)+\dots +\beta(\lambda_n,\tau_n).
$$

It is straightforward to define interval exchange transformations of any metric oriented graph
 (a graph whose edges carry a Euclidean metric and an orientation).
Also, since changing an interval exchange transformation at finitely many points
 does not change the corresponding element of the group $\mathrm{IET}(I)$,
 it is also irrelevant what the domain of a given interval exchange map looks like as long as
 it is a finite metric graph.
For this reason, we will speak freely of interval exchange maps defined on various graphs.

Next, we want to generalize the SAF invariant to higher dimensional
 polytope exchange transformations.
Specifically, define a \emph{dynamic invariant} as a group
 homomorphism from $\mathrm{PET}(P)$ to $\R$.
In what follows, new dynamic invariants will be introduced in 2D (they also make sense in dimensions $>2$).

\subsection{Valuations}
\label{ss:val}
We first recall some classical notions and results related to valuations in a non-dynamical setting.
The following definition is standard.

\begin{dfn}[Valuations]
\label{d:val}
Let $\Pc$ be the set of all compact convex polygons in the plane, including $\0$.
A \emph{valuation} on convex polygons with values in $\R$ is by definition a function
 $\mu:\Pc\to\R$ such that $\mu(\0)=0$ and
$$
\mu(P\cup Q)=\mu(P)+\mu(Q)-\mu(P\cap Q)
$$
 for convex polygons $P$, $Q$ such that $P\cup Q$ is also convex.
This property of valuations is called \emph{finite additivity}.
A valuation is therefore the same as a finitely additive (signed) measure.
\end{dfn}

It is straightforward to check that any valuation $\mu$ extends to
 arbitrary, not necessarily convex, bounded polygons,
 and satisfies the same finite additivity property on this larger class of sets.

A valuation $\mu$ is said to be \emph{simple} if $\mu(P)=0$ whenever $\dim(P)<2$.
Say that $\mu$ is \emph{translation invariant} if $\mu(P+\vec v)=\mu(P)$ for all
 convex polygons $P$ and all $\vec v\in\R^d$.
Here, $P+\vec v$ means the result of the translation of $P$ by the vector $\vec v$.
More generally, one can speak of \emph{$\Gs$-invariant valuations}, where $\Gs$
 is any group of affine transformations of $\R^d$.
Simple $\Gs$-invariant valuations are useful for solving $\Gs$-scissors congruence problems.
Namely, if $P$ and $Q$ are $\Gs$-scissors congruent, then any simple $\Gs$-invariant valuation
 takes the same value on $P$ and $Q$.
In other words, simple invariant valuations help distinguish pairs of
 polytopes that are \emph{not} scissors congruent.

\begin{ex}[Area]
  Fix an area form on $\R^2$.
Then one can assign the \emph{area} $\area(P)$ to any polytope $P$.
Area is a simple valuation invariant under the full group of isometries of $\R^2$.
Now let $\xi:\R\to\R$ be any $\Q$-linear function.
Then $\xi\circ\area$ is also a simple isometry invariant valuation;
 this construction gives uncountably many such valuations, linearly independent over $\R$.
\emph{Perimeter} (suitably extended to degenerate polygons) is also an isometry invariant
 valuation, however, it is not simple.
\end{ex}

\begin{ex}[Hadwiger--Glur valuations]
Fix a line $L\subset\R^2$ together with a \emph{co-orientation}, that is,
 a labeling of the two components of $\R^2\sm L$ as
 \emph{the positive and the negative sides} of $L$.
Given a co-orientation of $L$, we can consistently define the positive and the negative
 sides of any line parallel to $L$.
Let now $P$ be a bounded polygon, not necessarily convex, and define $\mu(P)$ as follows.
Every edge $e$ of $P$ parallel to $L$ contributes an additive term $\pm |e|$ to $\mu(P)$,
 where $|e|$ is the length of $e$, and the sign is plus or minus
 depending on whether $P$ is on the positive or on the negative side of $e$ (locally near $e$,
 with respect to the co-orientation of the line containing $e$ that is consistent
 with the co-orientation of $L$).
The sum $\mu(P)$ of such terms is called the \emph{Hadwiger invariant} of $P$,
 and the valuation $\mu:P\mapsto \mu(P)$ is called the \emph{Hadwiger--Glur valuation}.
It was introduced by Hadwiger and Glur in \cite{HG51}, see also \cite{Had13}.
Observe that $\mu$ is simple and translation invariant.
Two polygons are $\Ts$-scissors congruent if and only
 if they have the same area and the same value of the Hadwiger invariants for all $L$, cf. \cite{HG51}.
Even though there are uncountably many Hadwiger--Glur valuations,
 only finitely many of them take nonzero values at a given polygon.
\end{ex}

Let $P$ be a bounded polygon.
By a \emph{dynamic invariant}, we mean a group homomorphism $\Psi$ from $\mathrm{PET}(P)$
 to the additive group of some $\Q$-vector space.
For any two transformations $f$, $g\in\mathrm{PET}(P)$, we therefore have
$$
\Psi(f\circ g)=\Psi(f)+\Psi(g).
$$
It follows that $\Psi(f^n)=n\Psi(f)$ for every $n\in\Z$.
Therefore, if $f$ is periodic, then \emph{any} dynamic invariant must vanish on $f$.
Dynamic invariants are, indeed, congugacy invariants ($\Psi(g\circ f\circ g^{-1})=\Psi(f)$).
Below, we give a general construction of two-dimensional dynamic invariants.

Suppose that $\mu$ is a simple translation invariant valuation on polygons in $\R^2$.
Take any $f\in\mathrm{PET}(P)$, whose domain consists of open convex polygons $P_i$
 (where $i$ runs from $1$ to $n$)
 and whose restriction to $P_i$ is the translation by a vector $\vec v_i$.
Define
$$
\Inv_{\mu}(f)=\sum_{i=1}^{n} \mu(P_i)\otimes \vec v_i\in \R\otimes_\Q\R^2
$$
 and call it the \emph{$\mu$-invariant} of $f$.
The main property of this invariant is the following.

\begin{thm}
  \label{t:additiv}
The function $\Inv_{\mu}$ is a group homomorphism:
$$
\Inv_{\mu}(g\circ f)=\Inv_{\mu}(g)+\Inv_{\mu}(f).
$$
In particular, any $\Q$-bilinear functional $\be:\R\times \R^2\to\C$ defines a dynamic invariant
 $\Inv_{\mu,\be}$ with values in $\C$, which can be computed as the sum of $\beta(\mu(P_i),\vec v_i)$ for $i=1$, $\dots$, $n$.
\end{thm}

\begin{proof}
Suppose that the domain of $f$ has components $P_1$, $\dots$, $P_n$, and the domain $g$ has components $Q_1$, $\dots$, $Q_m$.
Write $f_i:=f|_{P_i}$ and $g_j:=g|_{Q_j}$.
Note that the components of $\dom(g\circ f)$ are those of the sets $P_{ij}=f_i^{-1}(Q_j)$ which are nonempty.
Let $\vec v_i$, $\vec w_j$ stand for the translation vectors of $f_i$, $g_j$, respectively,
 and $\Sf$ for the set of pairs $(i,j)$ such that $P_{ij}\ne\0$.
Also, for every $j=1$, $\dots$, $m$, let $\Sf_j$ stand for the set of $i$s with $(i,j)\in \Sf$,
 and write $\Sf^i$ for the set of all $j$ with $(i,j)\in \Sf$.
Then, on $P_{ij}$, the composition $g\circ f$ acts as a translation by vector $\vec v_i+\vec w_j$.
Computing $\Inv_{\mu}(g\circ f)$ by definition yields
$$
\sum_{(i,j)\in \Sf} \mu(P_{ij})\otimes (\vec v_i+\vec w_j)=
\sum_{i=1}^n \sum_{j\in \Sf^i} \mu(P_{ij})\otimes \vec v_i + \sum_{j=1}^m \sum_{i\in \Sf_j}\mu(P_{ij})\otimes \vec w_j.
$$
Fix $j$ and consider each term of the right hand side for this fixed value of $j$.
Note that the sum of $\mu(P_{ij})$ over all $i\in \Sf_j$ equals $\mu(Q_j)$,
 by translation invariance and simplicity of the valuation $\mu$.
It follows that the second term in the right hand side equals the sum of $\mu(Q_j)\otimes \vec w_j$ over all $j$,
 which is $\Inv_{\mu}(g)$ by definition of ${\mu}$-invariant.
Similarly, the sum of $\mu(P_{ij})$ over all $j\in \Sf^i$ equals $\mu(P_i)$, by translation invariance and simplicity of $\mu$.
The first term in the right hand side is then the sum over all $i$ from $1$ to $n$ of $\mu(P_i)\otimes \vec v_i$, which
 is $\Inv_{\mu}(f)$ by definition of ${\mu}$-invariant.
The theorem follows.
\end{proof}

The following corollary of Theorem \ref{t:additiv} will be used in the
 proof of the Main Theorem.

\begin{cor}
\label{c:Inv-noper}
Let $\Delta$ be a bounded polygon, and take $f\in\mathrm{PET}(\Delta)$.
If $\Inv_\mu(f)\ne 0$ for some simple $\Ts$-invariant valuation $\mu$,
 then $f$ has infinite order in the group $\mathrm{PET}(\Delta)$.
\end{cor}

\begin{proof}
Similarly to the case of interval exchange maps, the relation $f^n=id$ in the group $\mathrm{PET}(\Delta)$
 implies $n\,\Inv_\mu(f)=0$ by Theorem \ref{t:additiv}, hence $\Inv_\mu(f)=0$, a contradiction.
\end{proof}

Next, we discuss dynamical version of the Hadwiger--Glur valuation.

\begin{dfn}[Dynamic Hadwiger invariant]
\label{d:Inv_L}
Fix a co-oriented line $L$ in $\R^2$, and
 define the \emph{dynamic Hadwiger invariant} $\Inv_{L}$ associated with $L$ as $\Inv_{\mu_L}$, where
 $\mu_L$ is the Hadwiger--Glur valuation corresponding to $L$.
Recall that $\Inv_{L}$ takes values in $\R\otimes_\Q\R^2$.
\end{dfn}

For a given polygon exchange transformation $f$,
 the dynamic Hadwiger invariant $\Inv_L(f)$ associated with $L$ can be computed as follows.
A component $Q$ of $\dom(f)$ and an edge $E$ of $Q$ contribute the additive
 term $[E|Q]:=\sign[E|Q] (|E|\otimes \vec v)$ to the invariant, where $E$ is an edge of $Q$
 parallel to $L$ (we write $|E|$ for the length of $E$), and $\vec v$ is the translation vector of $f|_Q$.
The \emph{sign} $\sign[E|Q]$ is defined as $+1$ if $Q$ lies on the positive side of $E$
 and as $-1$ otherwise.
A pair $(Q,E)$ like that, i.e., with $Q$ being a component of $\dom(f)$ and $E$ being an edge of $Q$
 parallel to $L$, is called an \emph{$L$-adapted pair}.
It is said to be \emph{positive} or \emph{negative} according to whether $\sign[E|Q]$ is positive or negative.
The following lemma summarizes the above discussion.

\begin{lem}
  \label{l:adapt}
If $f$ is a polygon exchange transformation,
  then $\Inv_{L}(f)$ can be computed as the sum of $[E|Q]$ over all $L$-adapted pairs $(Q,E)$.
\end{lem}

\section{Bounded restrictions of outer billiards}
\label{s:bnded}
As in the introduction, we write $\Pi=\Pi_N$ for the regular $N$-gon inscribed in the unit circle such that
 one of its vertices lies in the positive $x$-semiaxis.
Under a suitable identification of $\R^2$ with $\C$ (so that the origin
 of $\R^2$ identifies with the zero complex number, and the positive semiaxis
 identifies with the set of positive real numbers), the vertices of $\Pi$ are
 precisely all $N$-th roots of unity; they have the form $\zeta^k$ for $\zeta:=\exp(2\pi \mathbf{i}/N)$
 and $k=0$, $\dots$, $N-1$.
Recall a general result of Rukhovich \cite{Ruk22}
 (see also the ``Twice-odd Lemma'' of \cite{Hug13} for an earlier experimental evidence).

\begin{thm}[\cite{Ruk22}]
\label{t:Rukh}
  Let $N>3$ be odd.
The map $f_{\Pi_N}$ has aperiodic points if and only if $f_{\Pi_{2N}}$ does.
\end{thm}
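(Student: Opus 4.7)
The plan is to exploit the geometric relation between $\Pi_N$ and $\Pi_{2N}$: since the $2N$-th roots of unity split into the $N$-th roots of unity and their antipodes, the vertex set of $\Pi_{2N}$ equals the union of the vertex sets of $\Pi_N$ and $-\Pi_N$. For odd $N$, the polygon $-\Pi_N$ is a rotation of $\Pi_N$ by $\pi$, which is distinct from $\Pi_N$ because $\pi$ is not an integer multiple of $2\pi/N$, and the vertices of $\Pi_{2N}$ alternate between vertices of $\Pi_N$ and of $-\Pi_N$ as one goes around the polygon. Accordingly, the map $f_{\Pi_{2N}}$ alternates reflections through vertices of $\Pi_N$ with reflections through vertices of $-\Pi_N$, the order being prescribed by the angular position of the current iterate.

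Next, I would restrict both maps to suitable bounded invariant annular regions around the respective polygons so that the first return maps are bounded piecewise rotations. The alternation described above lets one compare a composition of two consecutive steps of $f_{\Pi_{2N}}$ --- reflection through a vertex of $\Pi_N$ followed by reflection through a vertex of $-\Pi_N$ --- with an iterate of $f_{\Pi_N}$ combined with the antipodal involution $z\mapsto-z$, which conjugates $f_{\Pi_N}$ to $f_{-\Pi_N}$. The aim is an orbit equivalence between the two first return maps; since first return maps to invariant regions with bounded return time preserve the property of having aperiodic points (an aperiodic orbit of the return map lifts to an aperiodic orbit of the full map, and conversely), this equivalence forces the ``iff'' statement for the original maps.

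The main obstacle is constructing this orbit equivalence carefully in the presence of two distinct piecewise structures. The support lines of $\Pi_N$ and of $\Pi_{2N}$, which partition the plane into cells on which the respective piecewise rotations act as single isometries, are different, so matching the cells requires a nontrivial cut-and-paste scheme that tracks which type of reflection ($\Pi_N$-vertex versus $-\Pi_N$-vertex) is being applied. Moreover, one must control the exceptional set --- a countable union of rays and line segments on which various iterates are undefined --- and verify that an aperiodic point of one system produces an aperiodic point of the other whose entire forward and backward orbit avoids the corresponding exceptional locus. This last verification is the delicate point of the argument.
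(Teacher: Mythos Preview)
The paper does not prove this theorem at all: it is stated with the attribution \cite{Ruk22} and immediately used to reduce Theorem~\ref{t:main} to the $2N$-gon case. There is no argument in the present paper to compare your proposal against; the proof lives in Rukhovich's paper (with earlier experimental evidence in Hughes's ``Twice-odd Lemma'').

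As for the content of your sketch: the starting geometric observation is correct --- for odd $N$ the vertex set of $\Pi_{2N}$ is the disjoint union of the vertex sets of $\Pi_N$ and $-\Pi_N$, and these alternate around the circle. But what you have written is a plan, not a proof. You explicitly flag the two substantive difficulties (matching the distinct cell decompositions coming from the support lines of $\Pi_N$ versus $\Pi_{2N}$, and controlling the exceptional loci so that aperiodicity transfers in both directions) without resolving either. The claim that two consecutive steps of $f_{\Pi_{2N}}$ correspond to ``an iterate of $f_{\Pi_N}$ combined with the antipodal involution'' is suggestive but, as stated, too vague to verify: which iterate, on which region, and how the combinatorics of the two piecewise structures line up all remain to be specified. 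If you intend to give a self-contained proof rather than cite \cite{Ruk22}, those steps need to be carried out in full; otherwise, citing the reference as the paper does is the appropriate move.
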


See Section \ref{ss:Rukh} for an informal explanation of why this theorem holds.
Thus, it suffices to prove the Main Theorem only for even values of $N$.
For this reason, we assume from now on and throughout the rest of the paper that $N$ is even.

\subsection{Vassal polygons}
\label{ss:neck}
Section \ref{ss:neck} claims no originality.
In ``setting the stage'' below, we essentially follow the construction of \cite{GS92,Tab93,BC11,Ruk22},
 however, the notation and some conventions will be different.
Let $R$ be the counterclockwise rotation by $\ta_N:=2\pi/N$ about 0, that is, $R(z):=\zeta z$ for all $z\in\C$.
Recall that $V_0$ is a sector with vertex $1=\zeta^0$ formed by lines
 containing the two incident to $1$ edges of $\Pi$, see Fig. \ref{fig:dual}.
Then $V_k=R^k(V_0)$; here the index $k$ can be viewed as a residue modulo $N$.
Write $S_k$ for the central reflection (=half-turn) about the vertex $\zeta^k$; we have
$$
S_k(z)=2\zeta^k-z=R^k\circ S_0\circ R^{-k}(z).
$$

Consider the strip formed by the lines $1\zeta$ and $(-1)(-\zeta)$; call it \emph{strip 0}; see Fig. \ref{fig:8gon}.
The line $(-\zeta)(-\zeta^{2})$ divides strip 0 into two equal unbounded parts.
One part, call it the \emph{senior part}, contains the polygon $\Pi$.
Define the \emph{vassal part} as the other part of strip 0; it is centrally symmetric to
 the senior part with respect to the midpoint of the common edge.
A regular polygon $\Pi^\dagger$ in the vassal part is symmetric to $\Pi$;
 call it the \emph{vassal polygon} of $\Pi$.
Since the two polygons $\Pi$ and $\Pi^\dagger$ are symmetric to each other
 with respect to the midpoint of the common boundary between the senior
 and the vassal parts of strip 0, these polygons have the same sidelength $\la_0=|\zeta-1|$.

Recall that $N$ is even, hence it has the form $N=2m+2$.\footnote{While the notation $N$
 is \emph{global}, i.e., used throughout the paper, the notation $m$ is \emph{local} --- it is effective till the
 end of Section \ref{s:bnded}, after which $m$ becomes a ``free variable'' again.}
Let $c_m$ be the intersection point of the line $(-\zeta)(-\zeta^2)$ with the line $1\zeta$.
Polygons $\Pi$ and $\Pi^\dagger$ are symmetric with respect to the midpoint of the segment $[-\zeta;c_m]$.
It follows that the line $(-\zeta)(-\zeta^2)$ contains an edge $[c_{m-1};c_m]$ of $\Pi^\dagger$.
Write $c_0$, $\dots$, $c_m$ for the vertices of $\Pi^\dagger$ symmetric to $1$, $\zeta^{-1}$, $\dots$, $\zeta^{-m}=-\zeta$,
 so that $c_k$ is symmetric to $\zeta^{-k}$.
Both $\Pi$ and $\Pi^\dagger$ are symmetric with respect to the reflection axis of strip 0.
Hence the line $1\zeta^{-1}$ contains the edge $[c_0;c_1]$ of $\Pi^\dagger$.

\begin{lem}
  The polygons $\Pi^\dagger$ and $R^{-1}\Pi^\dagger$ have $c_0$ as a common vertex.
\end{lem}

\begin{proof}
See Fig. \ref{fig:8gon}, left.
Define \emph{strip $-1$} as the image of strip 0 under $R^{-1}$.
It is bounded by the lines $\zeta^{-1}1$ and $(-1)(-\zeta)$.
Note that $R^{-1}c_m=c_0$ since $c_0$ is on the boundary line $\zeta^{-1}1$ of strip $-1$
 corresponding to the boundary line $1\zeta$ of strip 0 under $R^{-1}$ and,
 moreover, the distance from $c_0$ to $\zeta^{-1}$ equals the distance from $c_m$ to 1.
It follows that the vertex $c_0$ of $\Pi^\dagger$ is also a vertex of the polygon $R^{-1}\Pi^\dagger$, as claimed.
\end{proof}

\begin{figure}
  \centering
  \includegraphics[height=6cm]{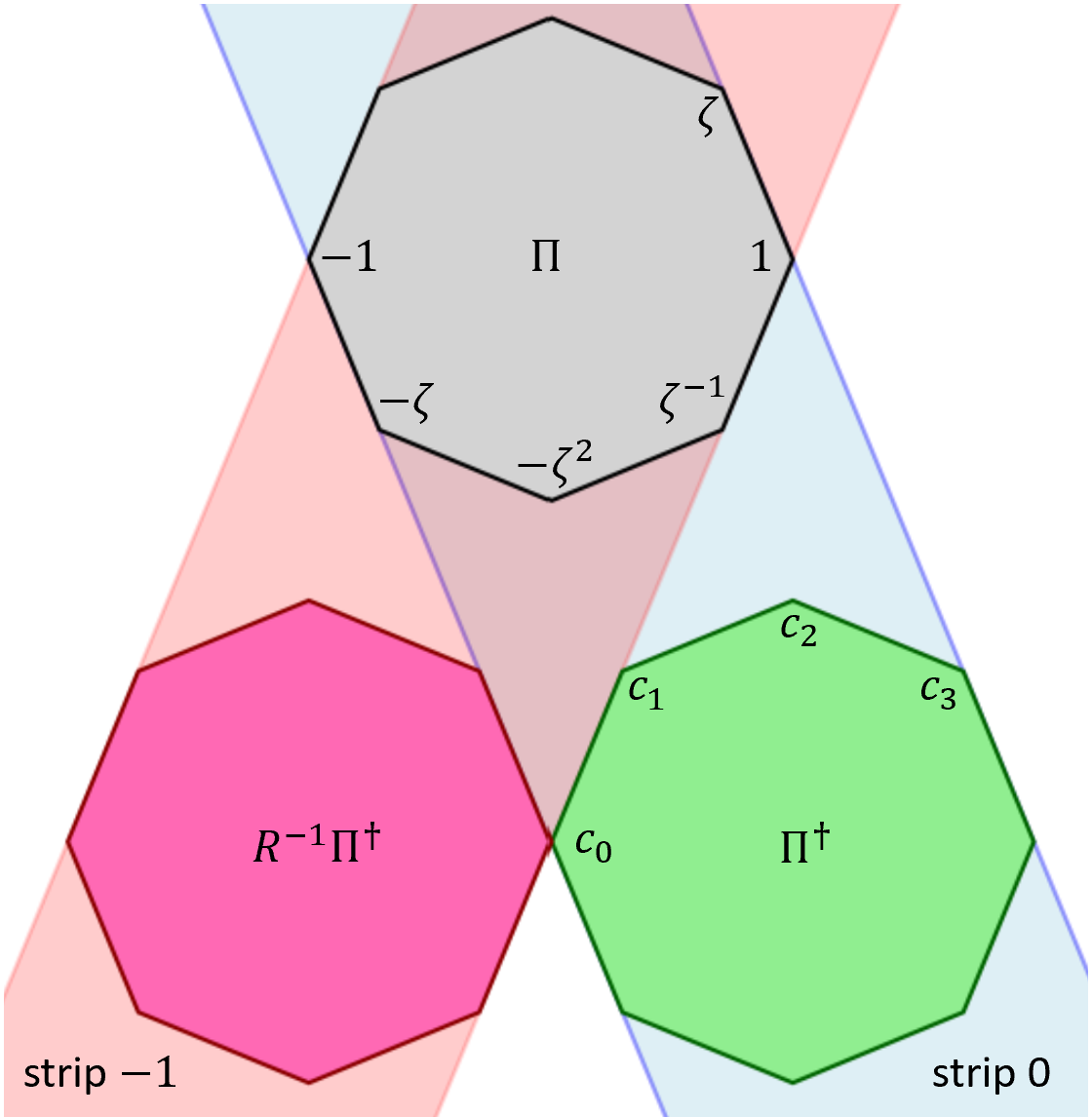}
  \includegraphics[height=6cm]{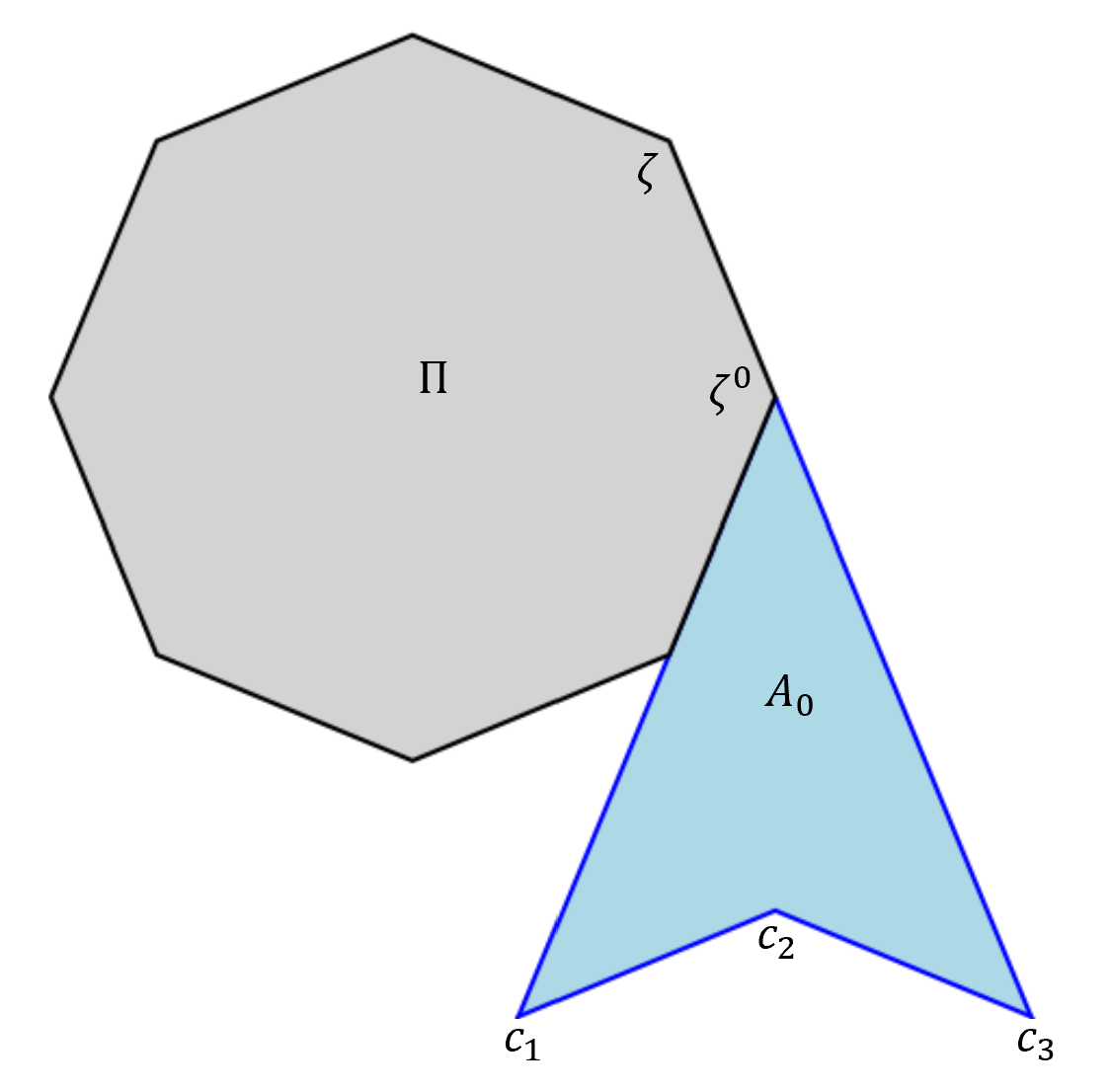}
  \caption{\small
  Left: the vassal polygon $\Pi^\dagger$, strip 0 and strip $-1$.
  Right: the polygon $\Pi$ and the subpolygon $A_0$ for $N=8$.}\label{fig:8gon}
\end{figure}

It is easy to characterize the vassal polygon in terms of dynamics:
 the following lemma is straightforward and is left to the reader.

\begin{lem}
\label{l:vassal-dyn}
The interior of
 $\Pi^\dagger$ of $\Pi$ coincides with the set of all points $z\in V_0$
 such that $f_\Pi^i(z)\in V_{mi\pmod N}$ for all $i\in\Z$.
\end{lem}

\subsection{The first invariant neighborhood $\Upsilon$ of $\Pi$}
\label{ss:fin}
The $R$-orbit of $\Pi^\dagger$ consists of $N$ regular polygons touching each other in a cyclic order,
 forming a \emph{necklace} (terminology is inspired by \cite{GS92} even though it is
 not the same necklace as there).

\begin{dfn}[The first invariant neighborhood]
\label{d:fin}
Let $\Upsilon$ be the bounded complementary component of this necklace.
Call $\Upsilon$ \emph{the first invariant neighborhood} of $\Pi$.
It is clear that $\Pi\subset \Upsilon$, and the term ``invariant'' refers to the fact
 that $f_\Pi$ is a scissors automorphism of $\Upsilon\sm\ol\Pi$
 (alternatively, one can extend $f_\Pi$ to all of $\Upsilon$ by setting it to
 act as the identity map on $\Pi$).
\end{dfn}

Set $A_0:=(V_0\sm\ol \Pi^\dagger)\cap \Upsilon$; this is a non-convex polygon with
 vertices $1$, $c_1$, $\dots$, $c_{m}$; see Fig. \ref{fig:8gon} where $m=3$.
Here, the numbering of the vertices follows their counterclockwise order in the boundary of $A_0$,
 see Fig. \ref{fig:8gon}, right.

Note that any diameter line of $\Pi$ is a symmetry axis of $\Upsilon$,
 and that $f_\Pi$ takes $A_0$ to the symmetric polygon with respect to the diameter through $1$ and $-1$.
In particular, $f_\Pi(A_0)\subset \Upsilon$, and similarly for all $A_k:=R^kA_0$ for $k=0$, $\dots$ $N-1$.
The restriction $f_\Upsilon$ of $f_\Pi$ to $\Upsilon\sm\ol\Pi$ is therefore a piecewise rotation of $\Upsilon\sm\ol\Pi$ whose
 domain consists of the polygons $A_k$, cf.  \cite[Lemma 20]{Ruk22}.
See Fig. \ref{fig:gX}, left, where these ``cursor'' shaped polygons $A_k$ are shown for $N=8$.

\begin{figure}
  \centering
  \includegraphics[width=.8\textwidth]{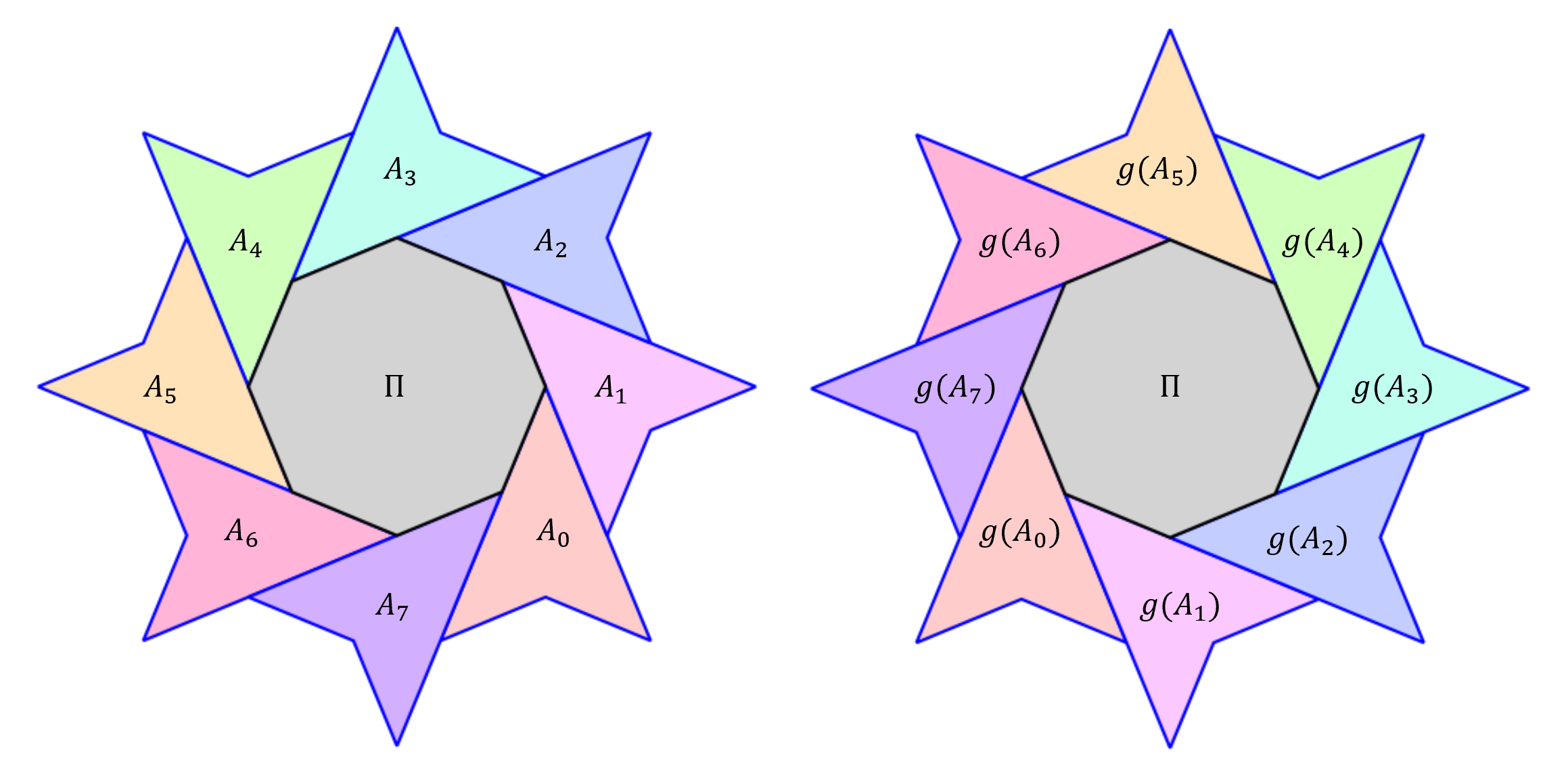}
  \caption{\small The action of the map $g=g_\Upsilon$ on $\Upsilon\sm\Pi$.
  Left: components of $\dom(g)$.
  Right: their $g$-images.
  }\label{fig:gX}
\end{figure}

Let $M$ denote the half-turn about the origin, that is, the map $z\mapsto -z$;
 one can also write $M=R^{N/2}$.
Since $f_\Upsilon$ acts as a half-turn on every component of $\dom(f_\Upsilon)$,
 the map $g_\Upsilon:=M\circ f_\Upsilon$ is clearly a polygon exchange transformation.
Fig. \ref{fig:gX} illustrates the action of $g_\Upsilon$.
Observe that $M$ conjugates $f_\Upsilon$ with itself, therefore, it takes orbits to orbits,
 hence $g_\Upsilon$ has aperiodic points if and only if $f_\Upsilon$ does.

\begin{thm}
\label{t:fX-nonper}
Let $L$ be the line through the vertices $\zeta^0=1$ and $\zeta^1$ of $\Pi$.
Then $\Inv_L(g_\Upsilon)\ne 0$.
It follows that the scissors automorphisms $g_\Upsilon$ and $f_\Upsilon$ of $\Upsilon$ are not periodic.
\end{thm}

Theorem \ref{t:fX-nonper} is proved in the next section.

\subsection{Computation of $\Inv_L(g_\Upsilon)$}
All tensor products considered below are over $\Q$.
For our subsequent computation of the dynamic Hadwiger invariant for $g_\Upsilon$,
 we will need to know the lengths of all edges of $A_{0}$.
Edges $[c_1;c_2]$, $\dots$, $[c_{m-1};c_m]$ all have the same length as an edge of $\Pi$.
Namely, their length is $\la_0=2\sin\frac{\ta}{2}$.
Side edges $[1,c_1]$ and $[1;c_m]$ of $A_0$ have the same length $\la'$, which we will now compute.
Consider the triangle with vertices $1$, $-1$, $c_0$; this is an isosceles triangle
 with base length 2 and angles $\ta$, $\frac{\pi-\ta}{2}$, $\frac{\pi-\ta}{2}$.
It follows that the side edge of this triangle has length
$$
\la_0+\la'=\frac{1}{\sin\frac\ta 2}=\frac{2}{\la_0},
$$
which gives an expression for $\la'$ in terms of $\la_0$.
In the sequel, we will need an expression for $\la'$ in terms of $\la_0$ and $\zeta$.
One can use the relation $\zeta+\zeta^{-1}=2-\la_0^2$, in which the left-hand side
 is equal to $2\cos\ta$, and the right-hand side follows from the well-known expression for $\cos\ta$
 in terms of $\sin\frac{\ta}{2}$.
As a consequence, we obtain that
$$
\la'=\la_0\frac{\zeta+\zeta^{-1}}{2-\zeta-\zeta^{-1}}.
$$

Recall from the statement of Theorem \ref{t:fX-nonper} that
 the line $L$ connects $1$ and $\zeta$,
 and define the positive side of $L$ as the one \emph{not} containing $\Pi$.
Each edge $e$ of $A_0$ gives rise to two $L$-adapted pairs (see Section \ref{ss:val}) $(A_k,E)$ and $(M(A_k),M(E))$,
 where $E=R^ke$ is the corresponding to $e$ edge of $A_k$ parallel to $L$, and $M$ is the map $z\to -z$.
Call the pairs $(A_k,E)$ and $(MA_k,ME)$ the $L$-adapted pairs \emph{corresponding} to $e$.
Among these two pairs, exactly one is positive, and the other one is negative.
For example, $(A_0,[c_m;1])$ is a negative pair, hence, the corresponding to the edge $[c_m;1]$
 positive pair is $(A_{m+1},M[c_m,1])$.
Recall that $N=2m+2$.
Positive pairs are described in the next lemma.

\begin{lem}
  \label{l:pospair}
All positive $L$-adapted pairs
 are
$$
(A_1,R[1,c_1]),\quad (A_{m+1},M[c_m,1]),
$$
$$
(A_{-k},R^{-k}[c_{m-k},c_{m-k+1}]),\quad k=1,\dots,m-1.
$$
\end{lem}

\begin{proof}
In order to obtain a segment parallel to $L$, rotate $[c_{m-1};c_m]$ by $R^{-1}$.
Clearly, $A_{-1}$ appears on the positive side of $R^{-1}[c_{m-1};c_m]$.
Next, each time we pass from $k$ to $k+1$, the next segment $[c_{m-k-1};c_{m-k}]$
 is rotated by angle $-\ta$ compared to $[c_{m-k};c_{m-k+1}]$,
 and the side on which $A_0$ lies rotates accordingly.
This implies the desired result.
\end{proof}

We also need to know the translation vectors of $g_\Upsilon$.

\begin{lem}
  \label{l:tvec}
  Each polygon $A_k$ is translated by $\vec v_k=-2\zeta^k$ under $g_\Upsilon$.
\end{lem}

\begin{proof}
Under the map $f_\Upsilon$, or, more precisely, the continuous extension of it to $\ol A_k$,
 the vertex $\zeta^k$ of $V_k$ is mapped to itself.
Next, $M$ takes $\zeta^k$ to $M\zeta^k=-\zeta^k$.
Therefore, the continuous extension of $g_\Upsilon|_{A_k}$ onto $\ol A_k$ takes $\zeta^k$ to $-\zeta^k$.
In other words, $g_\Upsilon$ translates the polygon $A_k$ by vector $\vec v_k=(-\zeta^{k})-\zeta^k=-2\zeta^k$.
\end{proof}

Now that all required edge lengths and translation vectors are found,
 we can complete the computation of the dynamic Hadwiger invariant.

\begin{thm}
\label{t:Inv-gX}
Suppose $N>2$ is even.
The dynamic Hadwiger invariant of $g_\Upsilon$ equals
$$
\Inv_L(g_\Upsilon)=4\la_0\otimes\frac{\zeta(\zeta+\zeta^{-1})}{1-\zeta}-4\la_0\frac{\zeta(\zeta+\zeta^{-1})}{(1-\zeta)^2}\otimes (1-\zeta).
$$
\end{thm}

In the expressions for $\Inv_L(g_\Upsilon)$ given above,
 we used the following convention: if $x$ and $z$ are products
 of complex numbers, then these products are evaluated first,
 before evaluating $x\otimes z$.
The same convention is used throughout the paper.

\begin{proof}
Consider an edge of $A_0$ and the two corresponding $L$-adapted pairs.
Observe that the corresponding polygons $A_k$ and $MA_k$ are translated by
 opposite vectors under $g_\Upsilon$, by Lemma \ref{l:tvec}.
For this reason, $\Inv_L(g_\Upsilon)$ is twice the sum of the contributions of all positive pairs.
Edges of $A_0$ not on the boundary of $V_0$ contribute the sum
 $2\la_0\otimes \vec v_{\mathrm{bot}}$, where $\vec v_\mathrm{bot}$ is the sum of $\vec v_{-k}$
 over all $k=1$, $\dots$, $m-1$, and the subscript `bot' stands for `bottom'.

Computing $\vec v_{\mathrm{bot}}$ amounts to summing a geometric series, namely,
$$
\vec v_\mathrm{bot}=-2\sum_{k=1}^{m-1} \zeta^{-k}=-2\zeta^{-m}\frac{\zeta-\zeta^m}{1-\zeta}=2\frac{\zeta(\zeta+\zeta^{-1})}{1-\zeta}.
$$
It remains to account for the contributions of the two edges of $A_0$ that lie on the boundary of $V_0$.
Write this last contribution as $2\la'\otimes \vec v_{\mathrm{top}}$; where $\vec v_\mathrm{top}$ is equal to
 $\vec v_0+\vec v_1=2-2\zeta$.
Using the expression for $\la'$ through $\la_0$, we obtain the desired expression for $\Inv_L(g_\Upsilon)$.
\end{proof}

It remains to verify that the invariant is non-zero.

\begin{proof}[Proof of Theorem \ref{t:fX-nonper}]
According to the proof of Theorem \ref{t:Inv-gX}, the invariant $\Inv_L(g_\Upsilon)$
 can be written as $2\la_0\otimes \vec v_\mathrm{bot}+2\la'\otimes \vec v_\mathrm{top}$.
Here $\vec v_\mathrm{bot}$ and $\vec v_\mathrm{top}$ are complex numbers such that
$$
-\frac{\vec v_\mathrm{bot}}{\vec v_\mathrm{top}}=\frac{\la'}{\la_0}=\frac{\cos\ta}{2-\cos\ta}.
$$
This is a real but not rational number, under our assumptions on $N$.
Hence, $\vec v_\mathrm{bot}$ and $\vec v_\mathrm{top}$ are linearly independent over $\Q$.
Let $\Vc$ be the rational vector space spanned by these two vectors.
Note that $\Inv_L(g_\Upsilon)$ lies in $\R\otimes\Vc$, and the latter group has a natural
 structure of a real vector space with basis $1\otimes \vec v_\mathrm{bot}$ and $1\otimes \vec v_\mathrm{top}$.
Since $\Inv_L(g_\Upsilon)$ is a non-trivial linear combination of the basis vectors,
 this element is nonzero in $\R\otimes\Vc\subset \R\otimes\C$.

By Theorem \ref{t:additiv}, a periodic polygon exchange transformation
 of a bounded polygon must have zero invariant $\Inv_L$.
It now follows $\Inv_L(g_\Upsilon)\ne 0$ that $g_\Upsilon$ is not periodic.
Hence $f_\Upsilon=M\circ g_\Upsilon$ is not periodic either, since $M$ has period 2 and
 commutes with $f_\Upsilon$ and $g_\Upsilon$.
\end{proof}

\subsection{An informal explanation of Theorem \ref{t:Rukh}}
\label{ss:Rukh}
Aiming at providing a more complete understanding of the principal ideas behind the Main Theorem,
 we informally overview the proof of Theorem \ref{t:Rukh} here.
More precisely, we explain only the part of it that is needed for the Main Theorem:
 if the outer billiard on $\Pi_{2N}$, where $N$ is odd, has an aperiodic point, 
 then so does the outer billiard on $\Pi_N$.
The reader is referred to \cite{Ruk22} for formal details.

\begin{figure}
  \centering
  \includegraphics[width=\textwidth]{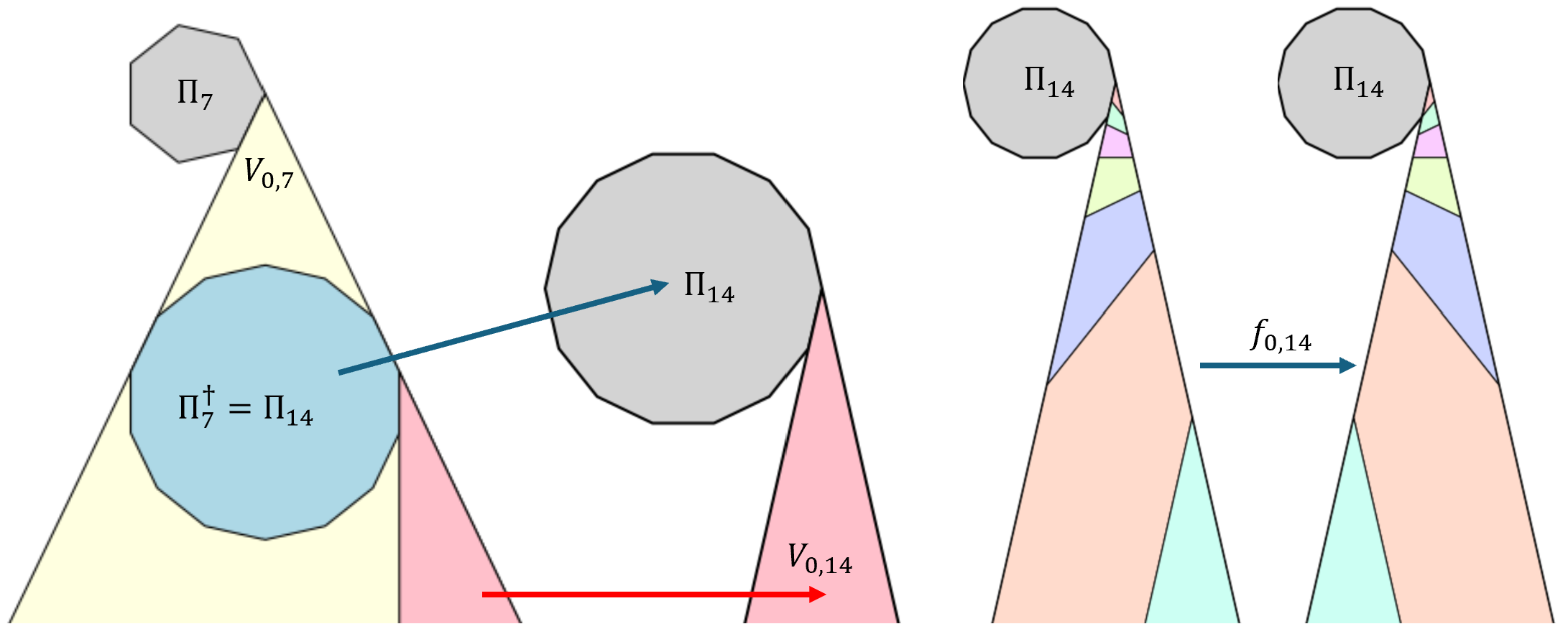}
  \caption{\small Left: a relationship between the outer billiard on $\Pi_N$ and that on $\Pi_{2N}$,
   for $N$ odd. Here, the case $N=7$ is shown.
   Right: the induced map $f_{0,14}$; note that it acts as a piecewise rotation
   rather than as an axial reflection.}\label{fig:Rukh}
\end{figure}

\begin{figure}
  \centering
  \includegraphics[width=\textwidth]{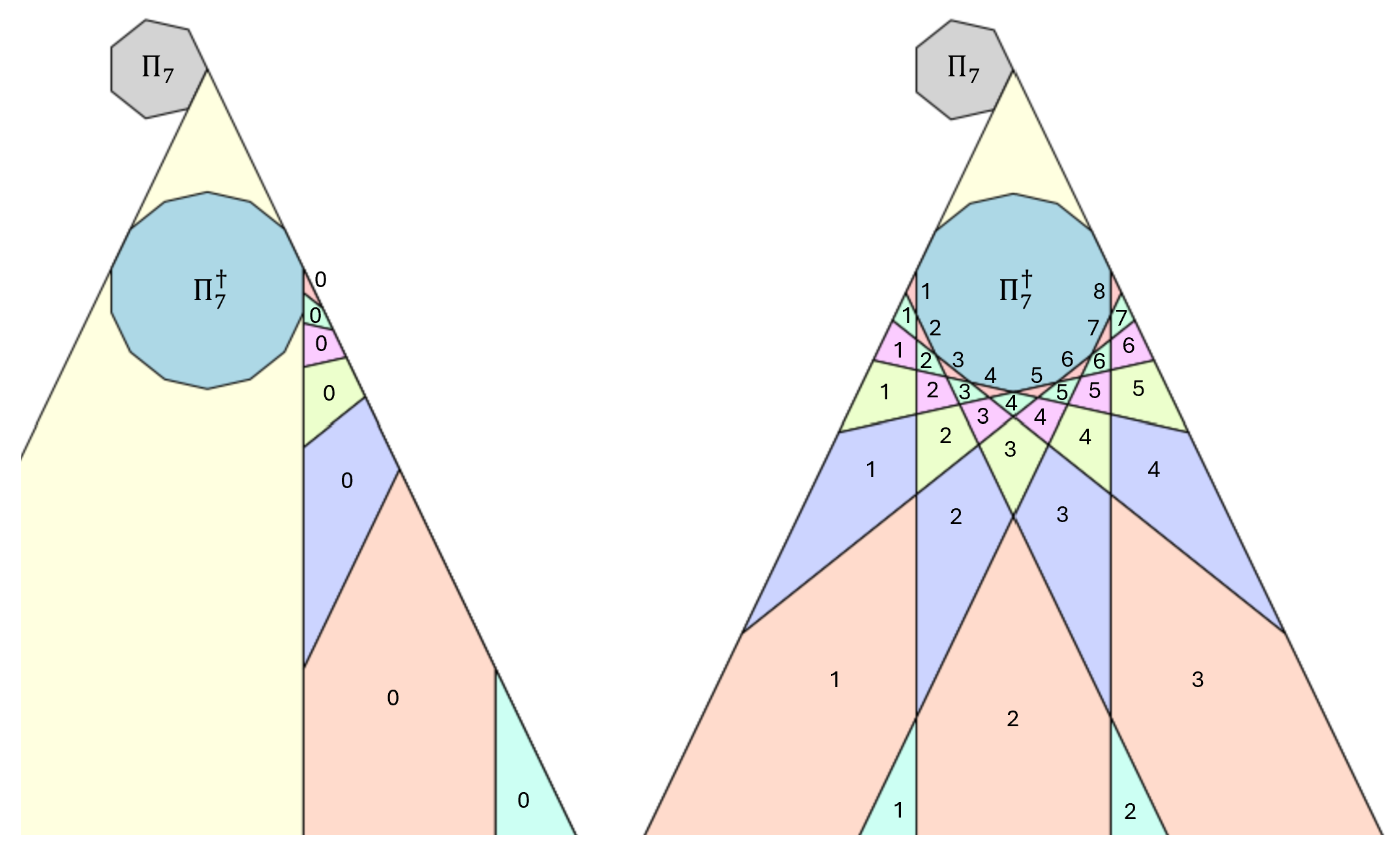}
  \caption{\small Components of $\dom(f_{0,14})$ are mapped forward under $f_{0,7}$ as shown.
  Every such component is marked with 0 on the left, and its iterated forward $f_{0,7}$-images
  are marked with positive integers (representing the iterates) on the right.
  For example, a small triangle marked with 0 is mapped, under $f^3_{0,7}$, to a small triangle of
  the same shape marked with 3.
  The return time to $V_{0,14}$ is the greatest mark associated with each shape.}\label{fig:Rukh1}
\end{figure}

Suppose that $N$ is odd, and consider a regular $N$-gon $\Pi_{N}$.
The vassal polygon of $\Pi_N$ can also be defined for odd $N$ but, in this case, it is
 a regular $2N$-gon rather than an $N$-gon.
Recall that $V_k=V_{k,N}$, where $k\in\Z/N\Z$, is the sector with the vertex at $\zeta^k_N=\exp(2\pi\mathbf{i}k/N)$, on which
 the outer billiard map $f_N:=f_{\Pi_N}$ acts as the reflection in $\zeta^k_N$.
Set now $m=[N/2]$ so that $N=2m+1$ and define the \emph{vassal polygon}
 $\Pi^\dagger_{N}$ of $\Pi_N$ as the regular $2N$-gon inscribed simultaneously into $V_{0,N}$ and $V_{-m,N}$,
 see Fig. \ref{fig:Rukh}.
It is also straightforward to characterize $\Pi^\dagger_{N}$ as the set of points $z$
 such that $f_N^i(z)\in V_{im,N}$, for all $i\in\Z$, similarly to Lemma \ref{l:vassal-dyn} above.

Re-denote the $2N$-gon $\Pi^\dagger_{N}$ as $\Pi_{2N}$
 and consider the outer billiard $f_{2N}$ on $\Pi_{2N}$.
Define the \emph{induced map} $f_{0,2N}$ of $f_{2N}$ as follows.
Given any component $P$ of $\dom(f^2_{2N})$, there is a unique $k$ with $f_{2N}(P)\subset V_{k,2N}$,
 and we let $f_{0,2N}$ act on $P$ as $R^{-k}_{2N}\circ f_{2N}$, where $R_{2N}$ is the rotation by angle $\theta_{2N}=\pi/N$
 about the center of $\Pi_{2N}$.
The map $f_{0,2N}$ thus defined is a scissors automorphism of $V_{0,2N}$; see Fig. \ref{fig:Rukh} (right).
Similarly, one defines the induced map $f_{0,N}$ of $f_N$; the latter is a scissors automorphism of $V_{0,N}$.
A crucial observation is: \emph{the first return map of $V_{0,2N}$ under $f_{0,N}$ coincides with $f_{0,2N}$.}
See Fig. \ref{fig:Rukh1} for an illustration.
It follows that all aperiodic points of $f_{2N}$ in $V_{0,2N}$, which are also
 aperiodic points of $f_{0,2N}$, are simultaneously aperiodic points of $f_{0,N}$, and hence of $f_N$.

\section{Symbolic model}
\label{s:Jc}
It follows from Theorem \ref{t:fX-nonper} that $f_\Upsilon$ has either aperiodic points or
 periodic points of arbitrarily large period.
We will prove that, in the latter case, one can find a converging sequence of periodic points
 whose periods tend to infinity such that the limit of the sequence is an aperiodic point.

Consider a polygon exchange transformation $f$ of a bounded open polygon $\Delta$.
Say that $f$ is \emph{unbranched} if $\Delta\sm\dom(f)$ is a union of finitely
 many straight line intervals with pairwise disjoint closures.
Note that $g_\Upsilon$ acts as an unbranched polygon exchange transformation of $\Upsilon\sm\ol\Pi$,
 as is shown in Fig. \ref{fig:unbd}, left.
The Main Theorem will follow from Theorem \ref{t:aper} stated below.

\begin{figure}
  \centering
  \includegraphics[height=5cm]{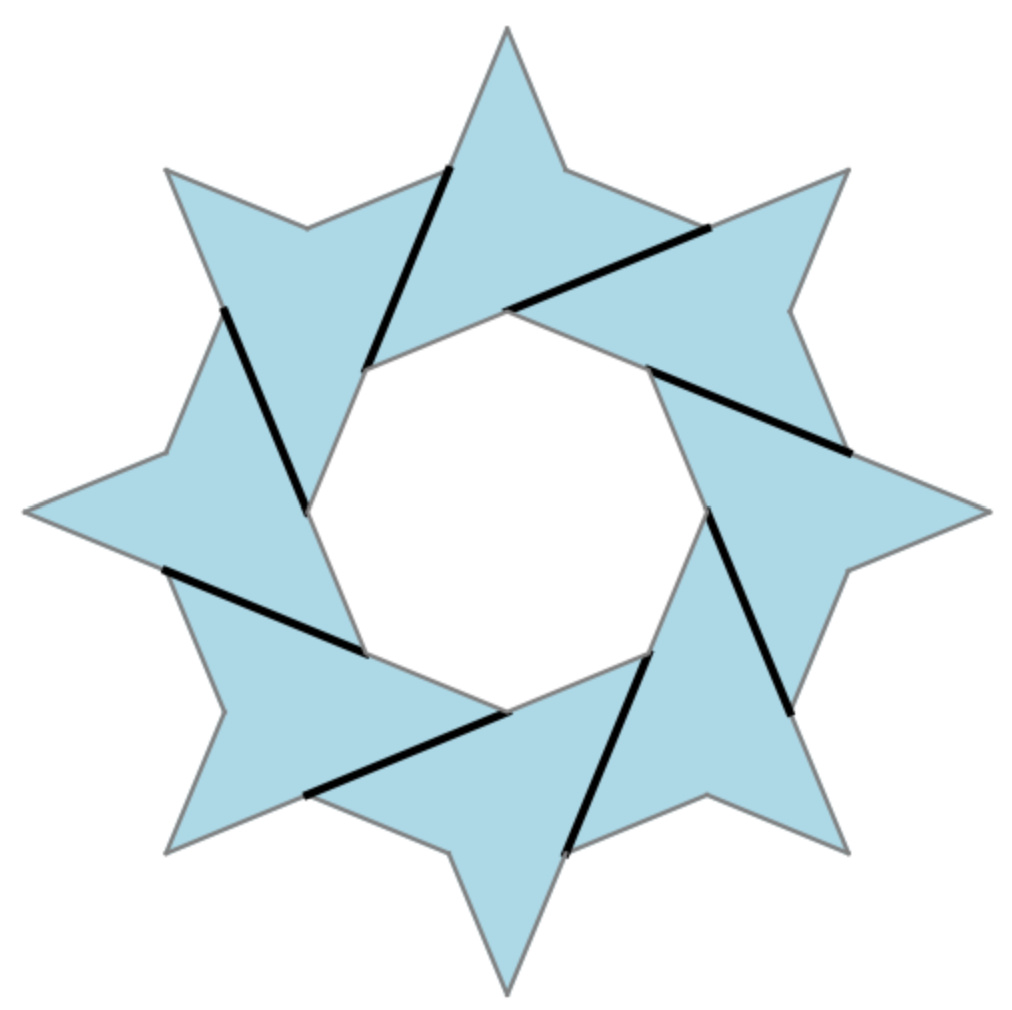}\hspace{.5cm}
  \includegraphics[height=5cm]{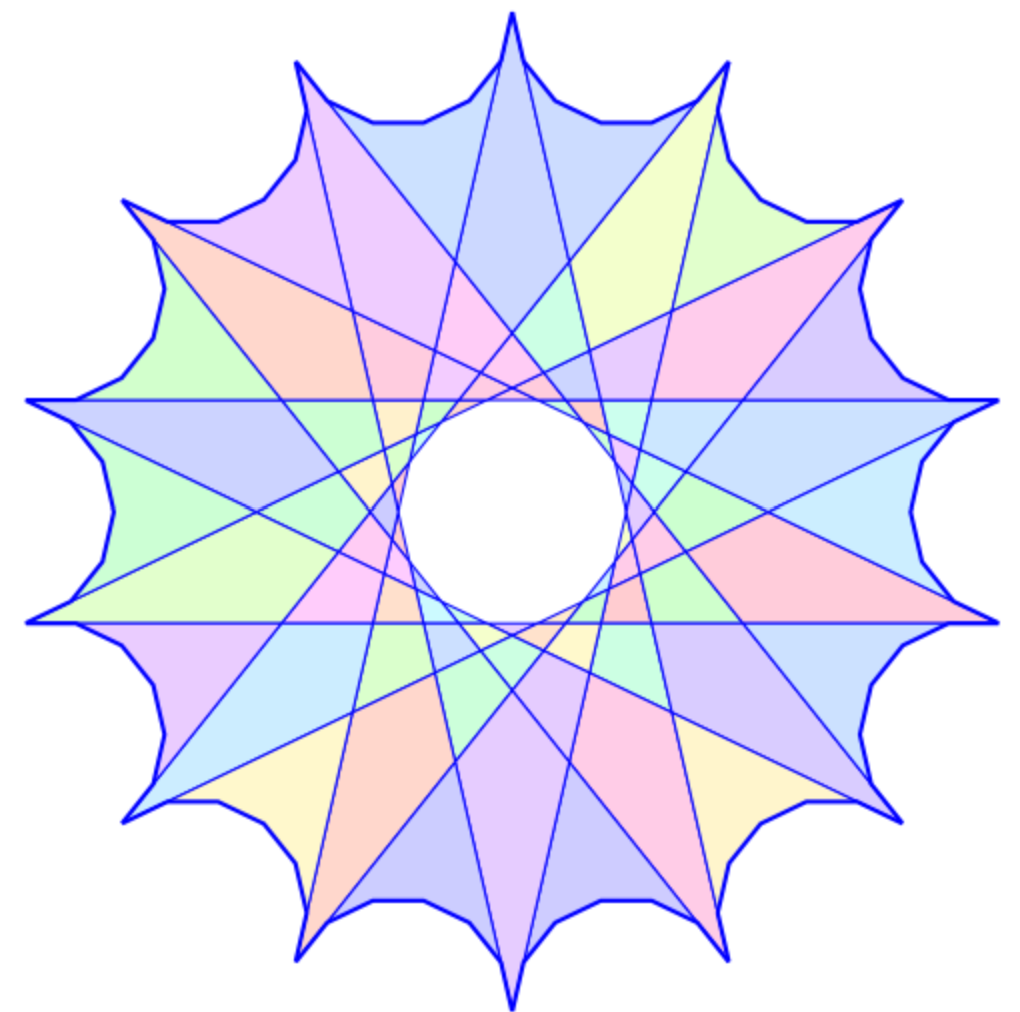}
  \caption{\small Left: for $\Delta=\Upsilon\sm\ol\Pi$ in the case $N=8$, the figure shows (in black) the set
  $\Delta\sm\dom(g_\Upsilon)$,
  where $g_\Upsilon$ is as in Section \ref{ss:fin}; this set is a union of disjoint straight
  line intervals. Thus, $g_\Upsilon$ is unbranched. Right: all level 1 pieces of $\Upsilon\sm\ol\Pi$ in the case $N=14$,
  i.e., the complementary components of $\Gamma_1=\Gamma_1^+\cup\Gamma_1^-$.}\label{fig:unbd}
\end{figure}

\begin{thm}
  \label{t:aper}
Let $f$ be an unbranched polygon exchange transformation of a bounded polygon
 such that, for at least one co-oriented line $L$,
the dynamic Hadwiger invariant $\Inv_L(f)$ is nonzero.
Then $f$ has aperiodic points.
\end{thm}

Here, the assumption that $f$ is unbranched can probably be removed;
 however, it simplifies the arguments, and it does hold for polygon exchange transformations
 arising from outer billiards on regular polygons.
Whether the assumption about a dynamic Hadwiger invariant being nonzero is essential, we do not know.
Theorems \ref{t:aper} and \ref{t:fX-nonper} imply the Main Theorem,
 since the map $g_\Upsilon$ from Section \ref{s:bnded} is clearly unbranched and has non-vanishing dynamic Hadwiger invariants.

\subsection{Pieces and nests}
\label{ss:pie}
On boundary points of a scissors automorphism $f$, not all iterates of $f$ are defined.
However, one can still associate some dynamical system with the boundary points.
To this end, we need a number of notions.

\begin{dfn}[Pieces and boundary graphs]
\label{d:pie}
Let $f$ be a scissors automorphism of a polygon $\Delta$.
For any nonnegative integer $n$, set $\Gamma^{\pm}_n:=\ol\Delta\sm\dom(f^{\pm n})$.
These are finite graphs in the plane, whose edges are straight line intervals.
Call $\Gamma^+_n$ the \emph{forward boundary graph of level $n$}, and $\Gamma^-_n$
 the \emph{backward boundary graph of level $n$}.
Also, set $\Gamma_n:=\Gamma^+_n\cup\Gamma^-_n$ and call it the \emph{full boundary graph of level $n$}.
\emph{Pieces of level $n$} (of $f$) are defined as complementary components of $\Gamma_n$ in $\ol\Delta$.
See Fig. \ref{fig:unbd}, right, for an illustration of level 1 pieces in the case of a regular 14-gon;
 see also Fig. \ref{fig:14gon}, where high level pieces are shown.
\end{dfn}

Note that $\Gamma_0=\d\Delta$ and that $0\le n<m$ implies $\Gamma_n\subset\Gamma_m$.

\begin{dfn}[Nests and the symbolic model]
\label{d:nest}
Define a \emph{nest} of $f$ as a function $q$ from $\Z_{\ge 0}$ to the space of open convex polygons in the plane
 such that, for every $n\in\Z_{\ge 0}$, the polygon $q(n)$ is a level $n$ piece of $f$, and $q(n+1)\subset q(n)$.
In other words, $q$ represents a nested sequence of pieces $q(0)\supset q(1)\supset \dots$.
The \emph{symbolic model $X_f$} of $f$ is defined as the set of all nests.
Observe that not all inclusions $q(n+1)\subset q(n)$ have to be strict.
See Fig. \ref{fig:nest} for an illustration of a nest for the outer billiard on a regular 14-gon.
\end{dfn}

\begin{figure}
  \centering
  \includegraphics[width=.9\textwidth]{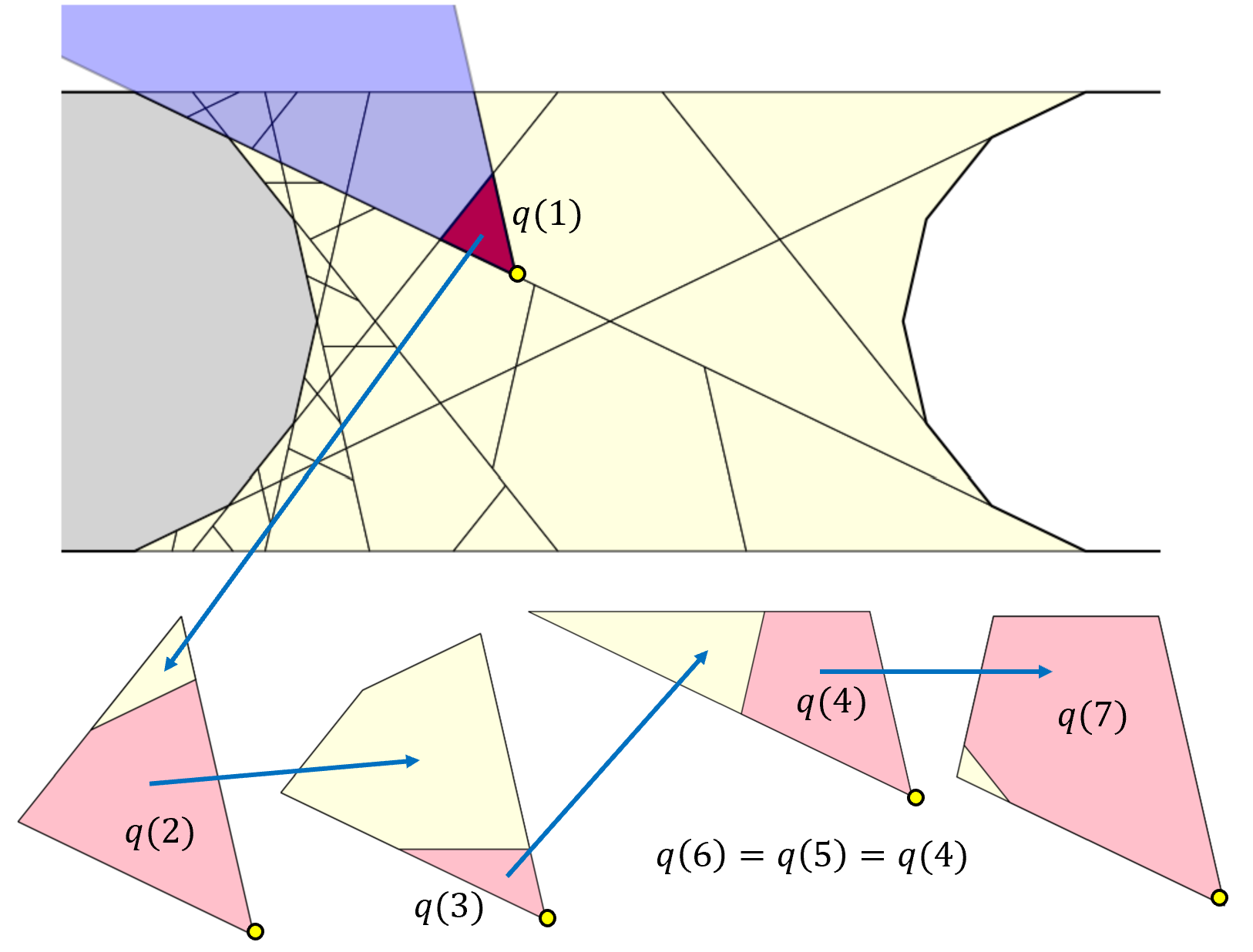}
  \caption{\small A nest for the outer billiard map $f_\Upsilon$ with $N=14$.
  Top: some level 1 pieces and an elementary sector (see Definition \ref{d:elem-sec}).
  Bottom: a finite sequence of nested pieces $q(n)$ with $n=1$, $\dots$, $7$.
  Each arrow points from a piece $q(n)$ to its zoomed-in copy,
  where the next piece $q(n')$, $n'>n$ is also shown.
  The apex of the common elementary sector is marked by a small circle.
  }\label{fig:nest}
\end{figure}

Points of $X_f$ can also be identified with certain bi-infinite words in the alphabet
 whose symbols are associated with the connected components of $\dom(f)$.
Namely, a bi-infinite word corresponds to a point of $X_f$ if every finite part of
 it is realizable as a (partial) itinerary of some point under $f$,
 with respect to the partition of $\Delta$ by the components of $\dom(f)$. 
For this reason, we call $X_f$ the symbolic model, even though we prefer the
 more geometric language of nests to the language of symbolic dynamics.

The most important for our purposes is the following statement about nests
 for the outer billiard map $f_\Pi$ (which is contained in Proposition \ref{p:Xf-types}):
 \emph{if $f_\Pi$ has no aperiodic points, then there is a nest $q$ for $f_\Pi$
 such that $q(n)$ has periodic points of arbitrarily large period, for every $n$.}
Below, this statement is deduced from general properties of $X_{f_\Pi}$,
 mainly, from compactness.

\begin{dfn}[Impressions]
Given a nest $q\in X_f$, define the \emph{impression} $\imp(q)$ of $q$ as the intersection of $\ol{q(n)}$ over all $n\in\Z_{\ge 0}$.
\end{dfn}

By definition, the impression of $q\in X_f$ is a convex set.
If $\Delta$ is bounded, then the impression is nonempty and compact.
Moreover, it is either periodic or has zero area; in the latter case, $\imp(q)$
 is a point or a compact straight line interval.

The symbolic model $X_f$ is not only a set but a metric space,
 in which the distance between two nests $q$ and $q'$ is by definition $2^{-m}$,
 where $m$ is the maximal non-negative integer such that $q(m)=q'(m)$
 (it follows that $q(n)=q'(n)$ for all $n\le m$).
It is not hard to see that this distance function is indeed a metric, even an ultra-metric:
 given three nests, the distance from the first to the third is at most
 the maximum of the distance from the first to the second
 and the distance from the second to the third.
A basis of open sets in $X_f$ is labeled by pieces $Q$ of arbitrary level $n$.
Given such $Q$, the corresponding basis open set $\Bc(Q)$ is formed by all nests $q$ with $q(n)=Q$.

\begin{lem}
  \label{l:Jc-comp}
The symbolic model $X_f$ is compact.
\end{lem}

\begin{proof}
It suffices to show that any cover of $X_f$ by basis open sets admits a finite subcover.
By way of contradiction, consider a family $(\Bc(Q_\al))_\al$ of basis open sets covering the space $X_f$
 and such that it has no finite subcover.
Here $\al$ ranges through some index set.
Now define a nest $q$ as follows.
Set $q(0)=\Delta$ and, for every $n>0$, define $q(n)$ as a level $n$ piece
 contained in $q(n-1)$ and such that the given cover has no finite subcover over $\Bc(q(n))$.
Finding such $q(n)$ is possible since $\Bc(q(n-1))$ is a finite union of $\Bc(Q)$,
 with $Q$ ranging through all level $n$ pieces contained in $q(n-1)$.
A contradiction with the fact that $q$ is covered by some $\Bc(Q_\al)$,
 that is, $q(n)=Q_\al$ for some $n$: then $\Bc(q(n))$ is covered by a single basis
 open set from the given cover, contrary to the choice of $q(n)$.
\end{proof}

The scissors automorphism $f$ gives rise to a homeomorphism $h_f:X_f\to X_f$ as follows.
By definition, the image of a nest $q$ under $h_f$ is the nest $q'$ such that
 $q'(n)$ is the level $n$ piece containing $f(q(n+1))$.
It is easy to see that $h_f$ is indeed a homeomorphism by looking at how $h_f$
 acts on the basis open sets $\Bc(Q)$.

Two lemmas and a definition that follow will be used in Section 5.

\begin{lem}
\label{l:imp-fin}
Let $f$ be a polygon exchange transformation.
Then any point of $\ol\Delta$ lies in the impressions of at most $m_f$ nests,
 where $m_f$ is twice the number of lines through the origin parallel to edges of $\Gamma^+_1$.
\end{lem}

\begin{proof}
First note that the edges of any boundary graph $\Gamma_n$ are parallel to the edges of $\Gamma^+_1$,
 since $f$ acts by parallel translations on all components of $\dom(f)$.
Even though $\Gamma_n$ may have many edges, these lie in at most $m_f/2$ families of parallel lines.
Pick a point $z\in\ol\Delta$ and draw the $m_f/2$ lines through $z$ that are parallel to the edges of $\Gamma^+_1$.
Note that, if $z\in\ol Q$ for some piece $Q$, then $\ol Q$ looks near $z$ as the closure of the
 union of some sectors formed by the given lines.
Since there are at most $m_f$ sectors, the claim follows.
\end{proof}

Every piece $Q$ of any level looks near its boundary point $z\in\d Q$ as a sector (=a convex cone).
The latter sector is called \emph{the sector of $Q$ at $z$}.

\begin{dfn}[Elementary sectors]
\label{d:elem-sec}
An \emph{elementary sector} at a point $z\in\ol\Delta$ is by definition a
 minimal by inclusion sector of a piece $Q$ with $z\in \d Q$.
See Fig. \ref{fig:nest} for an illustration of an elementary sector
 (all pieces $q(n)$ with $q=1$, $\dots$, $7$, share the same sector).
\end{dfn}

Lemma \ref{l:imp-fin} can be restated in terms of elementary sectors
 as follows.

\begin{lem}
  \label{l:ess-sec}
Consider a polygon exchange transformation $f$ of a polygon $\Delta$.
Given any $z\in\ol\Delta$, there are at most $m_f$ elementary sectors at $z$,
 where $m_f$ is as in Lemma \ref{l:imp-fin}.
\end{lem}

\begin{figure}
  \centering
  \includegraphics[width=\textwidth]{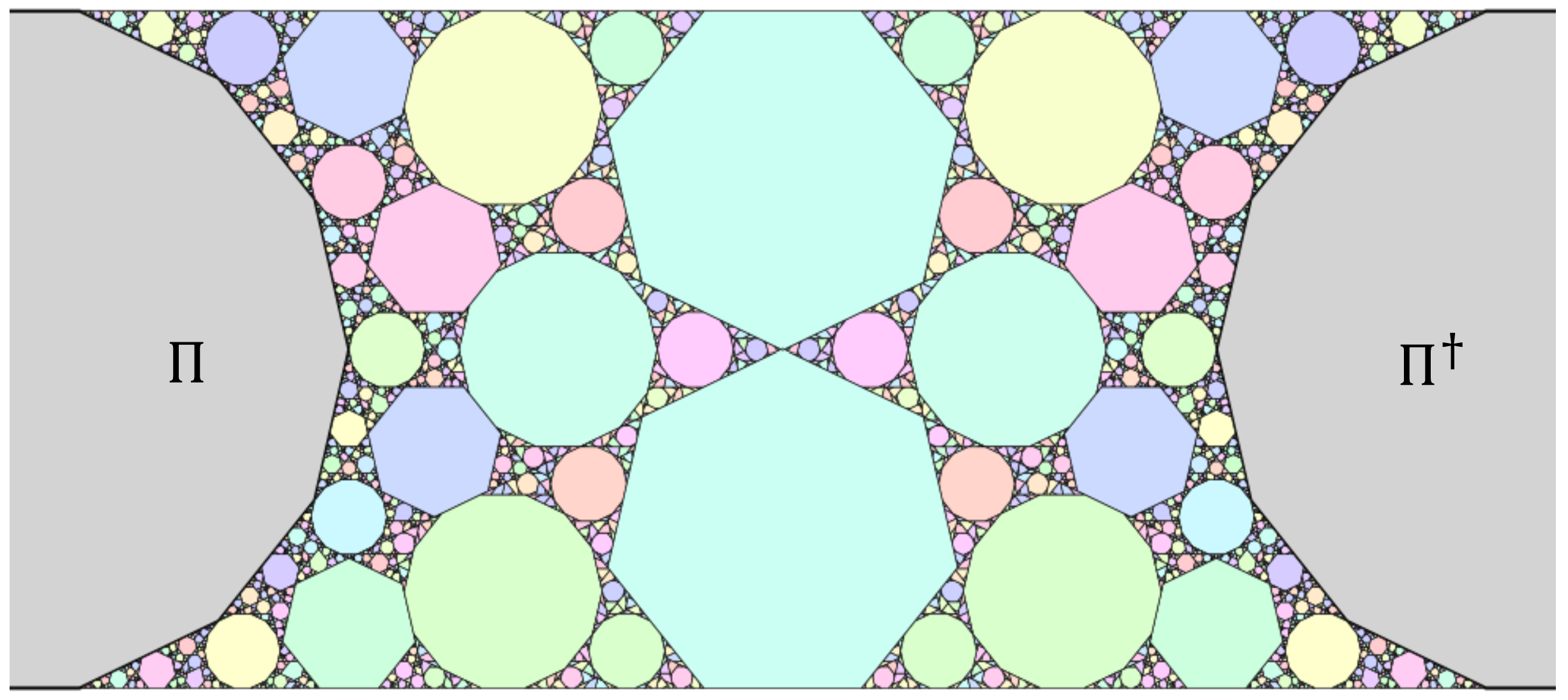}
  \caption{\small Pieces of high level, in the case of a regular 14-gon.}\label{fig:14gon}
\end{figure}

\subsection{Level function and rank}
\label{ss:lev}
Suppose now that $f$ is a polygon exchange transformation of a bounded polygon $\Delta$
 \emph{that has no aperiodic points}.

\begin{prop}
\label{p:Xf-types}
Let $f$ satisfy the above assumptions.
For every nest $q\in X_f$, one of the following options holds:
\begin{enumerate}
  \item The impression of $q$ is the closure of some periodic domain of $f$.
  In this case, $q$ itself is periodic under $h_f$, and the sequence of pieces $q(n)$ is eventually constant.
  \item The impression of $q$ is a point or a line segment in $\Gamma_m$, for some $m$.
  In this case, every $q(n)$ contains periodic points of arbitrarily large period.
  Also, $q$ is not periodic under $h_f$.
\end{enumerate}
If $f$ is not periodic, then case $(2)$ takes place for some $q\in X_f$.
\end{prop}

\begin{proof}
Impressions of distinct nests have disjoint interiors.
Therefore, if $\imp(q)$ has nonempty interior, then $\imp(q)$ is periodic ---
 which follows from the conservation of area under $f$ and from the fact that $\Delta$ has finite area.

Suppose now that $\imp(q)$ has no interior --- by convexity of $\imp(q)$,
 it has then to be a point or a straight line segment.
Also, there can  be no periodic points in $\imp(q)$ --- as it is easy to
 see and well known \cite{Tab93,Tab95}, every periodic point of $f$ lies in some periodic domain.
By our assumption, there are no aperiodic points either.
Therefore, $\imp(q)$ has to consist of boundary points.
If $\imp(q)$ is a singleton, it follows immediately that $\imp(q)\subset\Gamma_m$ for some $m\ge 0$.
Otherwise, $\imp(q)$ is a line segment consisting of boundary points.
Every point of this segment is a boundary point of all but finitely many $q(n)$.
Hence, there is an edge $e$ of some $\Gamma_m$ such that every $q(n)$ with sufficiently large $n$
 has an edge in $e$; one has $\imp(q)\subset\Gamma_m$ in this case.
If $q(n)$ has only finitely many periodic domains, then there are only finitely many
 pieces (of all levels) in $q(n)$; therefore $q$ has to stabilize.

Let us prove that $q$ cannot be periodic in case (2).
Assume the contrary: $f^k(q(n+k))\subset q(n)$ for a fixed $k>0$ and all $n\ge 0$.
Denote by $g$ the translation by which $f^k$ acts on $q(n+k)$ for all sufficiently large $n$.
If $g$ is the identity, then $q(n+k)$ consists of periodic points, and we have case (1).
Suppose now that $g$ is not the identity.
Choose an integer $m>0$ so large that $g^{m}(\Delta)\cap\Delta=\0$.
This is a contradiction with the property $f^{mk}(q(n+mk))\subset q(n)$.

Finally, assume that $f$ is not periodic.
Then there are points of arbitrarily large period
 (indeed, periodic domains of a given period are pieces of bounded level,
 there are finitely many of them).
Choose a sequence $z_n\in \Delta$ of periodic points such that the
 periods of $z_n$ tend to infinity.
Passing to a subsequence, assume that $z_n\to z\in\ol\Delta$.
Define a nest $q\in X_f$ so that, for each $m$, the piece $q(m)$ contains infinitely many terms $z_n$.
It is easy to see that $q$ is of type (2).
\end{proof}

A symbolic sequence representing a given element of $X_f$ may be periodic, in which
 case it represents a nest of type (1), in the language of Proposition \ref{p:Xf-types}.
Otherwise, it cannot have subwords that are arbitrarily long repetitions of the same word ---
 this is a translation of the last property in (2), together with compactness of $X_f$, into the symbolic language.

Let $\Jc_f$ be the set of all non-periodic points of $X_f$, i.e., points of type (2),
 in the terminology of Proposition \ref{p:Xf-types}.
It is immediate from this proposition that $\Jc_f$ is compact and that every point
 of $\Jc_f$ is a limit of periodic points of $X_f$ whose periods tend to infinity.
By Proposition \ref{p:Xf-types}, one has $\Jc_f\ne\0$ unless $f$ is periodic.

\begin{dfn}[Level function]
\label{d:lev}
Define the \emph{level function} $\lev:\Jc_f\to\Z_{\ge 0}$ as follows:
 $\lev(q)$ for $q\in\Jc_f$ is the smallest integer $m\ge 0$ with the property $\imp(q)\subset \Gamma_m$.
For all $q\in X_f\sm \Jc_f$, set $\lev(q)=+\infty$; thus, the level function
 can also be viewed as a function on $X_f$ with values in $\Z_{\ge 0}\cup\{+\infty\}$.
\end{dfn}

An important property of the level function is its lower semi-continuity.
Recall that a function $\xi:X\to\R\cup\{+\infty\}$ on a topological space $X$ is said to be \emph{lower semi-continuous}
 if the set $f^{-1}\{x\mid x>c\}$ is open, equivalently, the set $f^{-1}\{x\mid x\le c\}$ is closed, for every $c\in\R$.

\begin{lem}
  \label{l:lev-lsc}
The level function is lower semi-continuous on $X_f$.
\end{lem}

\begin{proof}
It suffices to establish that, for every integer $m\ge 0$,
 the set $\lev^{-1}\{n\in\Z_{\ge 0}\mid n\le m\}$ is closed in $\Jc_f$.
This set consists of all nests $q\in\Jc_f$ such that $\imp(q)\subset\Gamma_m$.
Clearly, if $\imp(q_n)\subset\Gamma_m$ for all $n\ge 0$ and $q_n\to q$ in $\Jc_f$,
 then also $\imp(q)\subset\Gamma_m$, which implies the lemma.
\end{proof}

For the next definition, we need ordinal numbers and the principle of transfinite induction.
Recall that every well-ordered set has its \emph{ordinal number}, or the \emph{ordinal};
 two well-ordered sets give rise to the same ordinal if and only if they are order isomorphic.
Ordinal numbers of finite sets are naturally identified with nonnegative integers.
Suppose that $\al$, $\be$ are the ordinal numbers of well-ordered sets $A$, $B$, respectively.
Write $\al<\be$ if $A$ is isomorphic to $\{b\in B\mid b<b_0\}$, for some $b_0\in B$;
 this is a total ordering which makes any set of ordinals into a well-ordered set.
If $\al$ is an ordinal number of a well-ordered set $A$, then $\al+1$ is defined as the smallest
 ordinal that is bigger than $\al$.
A \emph{limit ordinal} is a one \emph{not} of the form $\al+1$.
Define a \emph{transfinite sequence} as a family of objects $x_\al$ indexed by
 ordinals $\al$ that are less than a given ordinal $\al_0$.
In order to \emph{define a sequence $(x_\al)$ by transfinite induction}, it suffices
 to define $x_0$ and, for every $\al<\al_0$, to define $x_\al$ in terms of the
 already defined objects $x_\be$ with $\be<\al$.
Similarly, proving a transfinite sequence of claims $\mathfrak{P}_\al$ by transfinite
 induction involved proving $\mathfrak{P}_0$ and, for $\al<\al_0$, deducing $\mathfrak{P}_\al$
 form the claims $\mathfrak{P}_\be$ with $\be<\al$.

Contrary to a popular belief, transfinite induction does not rely on the Axiom of Choice (AC).
However, the AC is often needed to construct the set of ordinals, over which the induction is performed.
We use transfinite induction below.
However, it can be shown that considering only countable ordinals and
 using only a countable version of the AC is enough for our purposes;
 this is the version of AC used implicitly many times in any undergraduate Calculus textbook.

\begin{dfn}[Proper convergence and rank]
\label{d:rank}
Consider a compact metric space $X$ and a lower semi-continuous function $\xi:X\to\R_{\ge 0}\cup\{+\infty\}$.
Say that a sequence of points $x_n\in X$ \emph{converges properly} to $x\in X$ if $x_n\to x$ while $x_n\ne x$ and $\xi(x_n)\to\infty$.
Given any subset $A\subset X$, define \emph{proper limit points of $A$} as limits of properly converging sequences.
Now, define a transfinite sequence of subsets $X_\al(\xi)\subset X$ as follows.
Set $X_0(\xi)=X$.
Also, for every ordinal $\al>0$, define $X_{\al+1}(\xi)$ as the set of all proper limit points of $X_\al(\xi)$;
 for a limit ordinal $\al$, define $X_\al(\xi)$ as the intersection of all $X_\be(\xi)$ with $\be <\al$.
The transfinite sequence $X_\al(\xi)$ stabilizes at some $\al_1$, that is, $X_{\al_1}(\xi)=X_{\al_1+1}(\xi)$ while
 $X_\be(\xi)\ne X_{\al_1}(\xi)$ for $\be<\al_1$.
Define the \emph{rank of $(X,\xi)$} as this ordinal number $\al_1$.
For $X=X_f$ and $\xi=\lev$, note that $X_0(\xi)=X_f$ and $X_1(\xi)=\Jc_f$;
 define the \emph{rank of $f$} as the rank of $(X_f,\lev)$.
\end{dfn}

Note that every (sufficiently long) transfinite sequence of elements of a given set contains repetitions,
 simply because there are more ordinals than elements of any given set.
For a monotone sequence like $(X_\alpha(\xi))$, repetition means stabilization.
In the case of outer billiards, it is also true that stabilization happens at a finite or countable ordinal (but we do not need this),
 and it may well be the case that $\alpha_1=1$ (but we do not know).

Replacing ``proper limit points'' with just ``limit points'' in Definition \ref{d:rank} yields
 the well-known concept of the \emph{Cantor--Bendixon rank} of $X$;
 the version of $X_{\al_1}$, called the \emph{derivative of $X$ of order $\al_1$},
 is then the \emph{perfect part} of $X$, i.e., the biggest by inclusion perfect subspace of $X$
 (recall that a compact metric space is said to be \emph{perfect} if it has no isolated points).
Our proof of Theorem \ref{t:aper} will go by transfinite induction on $\rk(f)$.

\begin{lem}
\label{l:Yal}
Let $X$ be a compact metric space, $\xi:X\to\Z_{\ge 0}$ be a lower semi-continuous function on $X$,
 and $\al_1$ be the rank of $(X,\xi)$.
Then $X_{\al_1}(\xi)=\0$, and $\al_1$ has the form $\al_0+1$ for some ordinal $\al_0$.
Moreover, $\xi$ is bounded on $X_{\al_0}(\xi)$.
\end{lem}

\begin{proof}
By way of contradiction, assume that $Y:=X_{\al_1}(\xi)\ne\0$.
For every integer $m\ge 0$, consider the closed set $Y_{\le m}:=\{y\in Y\mid \xi(y)\le m\}$.
Since $X_{\al_1+1}(\xi)=Y$, the set $Y_{\le m}$ is nowhere dense in $Y$.
On the other hand, $Y$ is the union of $Y_{\le m}$ over all $m\in\Z_{\ge 0}$.
Recall Baire's theorem (see, e.g. Section 7.3 of \cite{KF75}):
 if $Y$ is a complete metric space, then any countable union of
 closed nowhere dense subsets of $Y$ has empty interior, in particular, cannot be $Y$.
The contradiction thus obtained shows that $Y=\0$.

If $\al_1$ has no immediate predecessor, then it is a limit ordinal,
 in which case $X_{\al_1}(\xi)$ can be obtained as the intersection of $X_\be(\xi)$ for all $\be<\al$.
However, the intersection of a family of nonempty compacta that is totally ordered
 by inclusion is nonempty; a contradiction with $X_{\al_1}(\xi)=\0$.

If $\xi$ is unbounded on $X_{\al_0}(\xi)$, then there is a sequence $x_n\in X_{\al_0}(\xi)$ with $\lev(x_n)\to \infty$.
Passing to a converging subsequence, assume that $x_n\to x$.
Then $x\in X_{\al_1}(\xi)$, a contradiction with $X_{\al_1}(\xi)=\0$.
\end{proof}

Call $X_{\al_0}(\xi)$ the \emph{core of $(X,\xi)$}.
Lemma \ref{l:Yal} is formally inapplicable to $(X_f,\lev)$ since $\lev$ may take infinite values.
However, it is applicable to $(\Jc_f,\lev)$.
Define the core of $(X_f,\lev)$ as the core of $(\Jc_f,\lev)$.

One can now add dynamics to the construction of the core.
Suppose that $h:X\to X$ is a homeomorphism and that $\xi$ is \emph{quasi-invariant} under $h$,
 that is, $\xi\circ h(x_n)\to\infty$ if and only if $\xi(x_n)\to \infty$.
Invariant functions ($\xi\circ h=\xi$) are quasi-invariant, but there are of course
 many other quasi-invariant functions.
For example, the function $\lev$ on $X_f$ is quasi-invariant under $h_f$
 since $\lev(q)-1\le \lev(h_f(q))\le \lev(q)+1$.

\begin{lem}
\label{l:core-inv}
Under the assumptions just made, the core of $(X,\xi)$ is $h$-invariant.
\end{lem}

\begin{proof}
Let us show by transfinite induction that $X_\al(\xi)$ is $h$-invariant, for every ordinal $\al$.
For $\al=0$, one has $X_0(\xi)=X$ and $h(X)\subset X$, which proves the induction base.
Suppose now that $\al>0$ and assume that $\al=\be+1$.
By induction $X_\be(\xi)$ is $h$-invariant.
Also, $X_\al(\xi)$ is defined as the set of proper limit points of $X_\be(\xi)$,
 so that $x\in X_\al(\xi)$ means $x=\lim x_n$, where $x_n\in X_\be(\xi)$ are all distinct from $x$
 and $\xi(x_n)\to\infty$.
It follows that, firstly, $h(x)=\lim h(x_n)$, secondly, $h(x)\ne h(x_n)\in X_\be(\xi)$, and, thirdly, $\xi(h(x_n))\to\infty$;
 hence $h(x)\in X_\al(\xi)$.
Here, the first claim is the continuity of $h$, the second claim is the injectivity of $h$ and the induction hypothesis,
 and the third claim is the quasi-invariance of $\xi$ under $h$.
One also needs to consider the case when $\al$ is a limit ordinal;
 in this case, $X_\al(\xi)$ is defined as the intersection of $X_\be(\xi)$ for all $\be<\al$.
The intersection if $h$-invariant sets is $h$-invariant.
\end{proof}

\subsection{Centers}
\label{ss:cent}
Start by considering an arbitrary homeomorphism $h:X\to X$
 of a nonempty compact metric space $X$ with no periodic points.
Recall that a point $x\in X$ is \emph{non-wandering} if, for every neighborhood $O$ of $x$ in $X$,
 there is a positive integer $n$ with $O\cap h^n(O)\ne\0$.
Let $\mathrm{NW}(X,h)$ denote the set of all non-wandering points of $X$.
It is not necessarily the case that, for the restriction of $h$ to $\mathrm{NW}(X,h)$, all points are non-wandering
 (because the neighborhoods are now considered in a smaller space).
However, iterating the operation $X\mapsto \mathrm{NW}(X,h)$ by transfinite induction,
 one obtains the following known result, cf. \cite[Section 3.3]{KH95}.

\begin{thm}
\label{t:wrp}
Suppose that $X$ is a nonempty compact metric space, and $h:X\to X$ is a homeomorphism without periodic points.
Then $X$ contains a nonempty perfect invariant subset $Y$ such that $\mathrm{NW}(Y,h)=Y$
 and any compact invariant subset $Z\subset X$ with $\mathrm{NW}(Z,h)=Z$ is contained in $Y$.
\end{thm}

The subset $Y\subset X$ from Theorem \ref{t:wrp} is clearly unique;
 it is called the \emph{center} of $(X,h)$ and is denoted by $X_\cent$.
As follows from Theorem \ref{t:wrp}, the center of $h$ is the maximal by
 inclusion compact invariant subset $Y$ of $X$ with the property $\mathrm{NW}(Y,h)=Y$;
 this implies the following corollary.

\begin{cor}
\label{c:wrp-inc}
Suppose that $h:X\to X$ is a homeomorphism of a compact metric space $X$ without periodic points.
For any compact $h$-invariant subset $Y\subset X$, one has $Y_\cent\subset X_\cent$.
\end{cor}

By a theorem of Cantor, every nonempty perfect metric space is uncountable.
It follows from Theorem \ref{t:wrp} that $X$ is uncountable;
 in particular, $\Jc_f$ is uncountable if nonempty.

Informally speaking, the main engine that promotes aperiodicity to the existence of aperiodic orbits
 is that, if the desired conclusion fails, we can use the nonempty space $\mathcal{J}_f$ to isolate some bad behavior
 and encapsulate it in a sub-system with zero dynamical invariants.
This lets us peel back a layer of the dynamics, so to speak, and get a
 simpler system with similar problem.
Theorem \ref{t:rem-core} stated below is the technical result that makes this possible.

\begin{thm}
\label{t:rem-core}
Let $f$ be an unbranched polygon exchange transformation of a bounded polygon $\Delta$
 with no aperiodic points.
Set $\al_0+1:=\rk(f)$, let $\Kc_0:=X_{f,\al_0}(\lev)$ be the core of $(X_f,\lev)$,
 and define $\Kc$ as the center of $(\Kc_0,h_f|_{\Kc_0})$.
There is a polygon (possibly disconnected) $\Delta_0\subset\Delta$ such that:
\begin{enumerate}
  \item for every $q\in\Kc$ and all large $n$, one has $q(n)\subset\Delta_0$;
  \item the restriction $f|_{\Delta_0}$ represents an element of $\mathrm{PET}(\Delta_0)$;
  \item all dynamic invariants $\Inv_L(f|_{\Delta_0})$ vanish.
\end{enumerate}
\end{thm}

Recall that $\Kc_0$ is $h_f$-invariant, by Lemma \ref{l:core-inv},
 so that $(\Kc_0,h_f)$ does indeed make sense as a dynamical system.
We first prove Theorem \ref{t:aper} assuming Theorem \ref{t:rem-core}, and then
 proceed with proving the latter.

\begin{proof}[Proof of Theorem \ref{t:aper} assuming Theorem \ref{t:rem-core}]
Let $\al_1=\al_0+1$ be the rank of $f$.
Suppose first that $\al_1=1$.
This means that $\Jc_f=\0$, hence $f$ is periodic by Proposition \ref{p:Xf-types}, a contradiction;
 the implication of Theorem \ref{t:aper} is therefore true since the assumptions are incompatible.
By transfinite induction on the rank, we may now assume that Theorem \ref{t:aper}
 holds for all unbranched polygon exchange transformations of bounded polygons,
 whose ranks are less than $\al_1$.
Let $\Kc$ and $\Delta_0$ be as in Theorem \ref{t:rem-core}.
Note that the restriction $f'$ of $f$ to $\Delta\sm\ol\Delta_0$ is also unbranched.
Since all dynamic Hadwiger invariants of $f|_{\Delta_0}$ vanish,
 the map $f'$ has the same dynamic Hadwiger invariants as $f$.
It suffices to show that $\rk(f')<\al_1$.

Set $X:=X_f$ and $X':=X_{f'}$.
Observe that every nest $q'\in X'$ defines a unique nest $q\in X$ such that
 $q'(n)\subset q(n)$ for all $n$.
The thus defined map of $X'$ into $X$ is an embedding.
Identifying $X'$ with the image of this embedding, we will think of $X'$ as a subset of $X$.
Let $X'_\al(\xi)$ and $X_\al(\xi)$, for an ordinal $\al$, be as in Definition \ref{d:rank}.
As follows immediately from the definition, $X'_\al(\xi)\subset X_\al(\xi)$ for all $\al$;
 in particular, $X'_{\al_0}(\xi)\subset X_{\al_0}(\xi)$.
Re-denote $X_{\al_0}(\xi)$ as $Y$ and $X'_{\al_0}(\xi)$ as $Y'$;
 both compacta $Y$ and $Y'$ are invariant under $h_f$.
If $Y'=\0$, then $\rk(f')<\al_1$, as desired.

Assume now that $Y'\ne\0$.
By Corollary \ref{c:wrp-inc}, one has $Y'_\cent\subset Y_\cent$.
On the other hand, by assumption (1),
 for every $q\in Y$ and all sufficiently large $n$, the polygon $q(n)$ lies in $\Delta_0$,
 and, consequently, $q$ cannot lie in $Y'$.
Hence, $Y'_\cent=\0$, a contradiction with Theorem \ref{t:wrp}, which tells, among other things,
 that the center is nonempty.
\end{proof}

Since Theorem \ref{t:aper} implies the Main Theorem, and Theorem \ref{t:rem-core}
 implies Theorem \ref{t:aper}, it now remains to prove Theorem \ref{t:rem-core}.

\section{Reduction to 1D}
\label{s:red1D}
This section proves Theorem \ref{t:rem-core}, which implies the Main Theorem.
Our standing assumptions in Section \ref{s:red1D} are as follows.
Let $f$ be an unbranched polygon exchange transformation of a bounded polygon $\Delta$
 with no aperiodic points.
As in the statement of Theorem \ref{t:rem-core}, let $\Kc_0$ be the core of $(X_f,\lev)$
 and define $\Kc$ as the center of $\Kc_0$.
By Lemma \ref{l:Yal}, the level function $\lev$ is bounded on $\Kc_0$, hence also on $\Kc$.
This means that the impressions of all nests from $\Kc$ are contained in the same finite graph $\Gamma_{k_*}$,
 for some $k_*\ge 0$.
One can take $k_*:=\max\xi|_{\Kc}$.

Informally, there are three main steps involved in Theorem \ref{t:rem-core}.
\begin{enumerate}
  \item Extend $f$ to some boundary points and obtain a restriction $\varphi_L$
   of the thus obtained map which is an interval exchange transformation acting on
   some finite union $K_L$ of intervals (Sections \ref{ss:decKc} and \ref{ss:fL}).
  \item Secondly, thicken up the map $\varphi_L$ and make it into a polygon exchange transformation
   defined on certain very thin trapezoids
  that are based on the segments of $K_L$ and defined in terms of elementary sectors
  (Definition \ref{d:trains} and Lemma \ref{l:trains}).
  \item Thirdly, show that the dynamical invariant of this new polygon
  exchange transformation vanishes (Theorem \ref{t:hor-all0}),
  thereby establishing Theorem \ref{t:rem-core}.
\end{enumerate}

\subsection{A decomposition of $\Kc$}
\label{ss:decKc}
Two co-oriented lines $L$, $L'$ are said to be \emph{parallel} if
 the positive half-plane with respect to $L$ is contained in that for $L'$, or the other way around.
Parallelism thus defined is an equivalence relation.

\begin{dfn}[Supporting lines]
Let $q\in\Jc$ be an essential nest.
Say that $q$ is \emph{supported by} a co-oriented line $L$ if $\ol{q(n)}\cap L\ne\0$ for all $n$,
 and $q(n)$ lies on the positive side of $L$, for all sufficiently large $n$.
\end{dfn}

If a nest is not supported by a co-oriented line, then the impression of this nest
 consists of aperiodic points, as is clear from the definitions.
Our standing assumptions therefore imply that every nest from $\Kc$ is supported by some co-oriented line.

\begin{dfn}[The $L$-limit part]
Consider a co-oriented line $L$.
Say that a nest $q\in\Kc$ belongs to the \emph{$L$-limit part} $\Kc_L$ of $\Kc$
 if there is a sequence $q_n\in\Kc$ converging to $q$ such that all $q_n$ are distinct from $q$,
 and all $q_n$ are supported by the same co-oriented line $L'$ parallel to $L$.
The line $L'$ in this case is called a \emph{strongly supporting line} of $q$;
 we also say that $q$ is \emph{strongly supported} by $L'$.
\end{dfn}

If $q$ is strongly supported by $L'$, then, clearly, $q$ is supported by $L'$.
A priori, being strongly supported by $L'$ is a stronger property that just being supported by $L'$.
Suppose that $q_n\to q$ and $q_n\ne q$ for a sequence of nests $q_n\in\Kc$ supported by $L'$.
Then $L'$ is the unique line containing the impressions of all $q_n$ with sufficiently large $n$.
It follows that $L'$ contains an edge of $\Gamma_{k_*}$.
We see that there are only finitely many lines that can strongly support nests from $\Kc_L$;
 these are the lines parallel to $L$ and containing edges of $\Gamma_{k_*}$.

\begin{lem}
\label{l:decKc}
Given any co-oriented line $L$, the $L$-limit part $\Kc_L$ is closed in $\Kc$, hence compact.
It is also $h_f$-invariant.
Moreover, $\Kc$ is the union of $\Kc_L$ for co-oriented lines $L$ parallel to the edges of $\Gamma_1$.
If $L_1$ and $L_2$ have a unique intersection point, then $\Kc_{L_1}\cap \Kc_{L_2}=\0$.
\end{lem}

\begin{proof}
Let $q_n\in\Kc_L$ be a sequence converging to $q\in\Kc$ and such that $q_n\ne q$.
Passing to a subsequence of $(q_n)$ and replacing $L$ with a parallel co-oriented line,
 we may assume that all $q_n$ are strongly supported by $L$.
It follows that $q$ is also strongly supported by $L$.
We thus proved that $\Kc_L$ is closed in $\Kc$.

Assume that $q\in\Kc_L$ is strongly supported by $L$,
 and choose a sequence $q_n\in\Kc$ such that $q_n\ne q$ are supported by $L$ and $q_n\to q$.
Let $e$ be the compact edge of some $\Gamma_m$ containing the impressions of infinitely many $q_n$s;
 choose $m$ to be the smallest positive integer with the given property.
Then $f$ extends to $e$ by continuity from the positive side of $e$,
 and the thus extended map takes $e$ to some edge $e'$ of $\Gamma_{m-1}$ if $m>1$
 or of $\Gamma_{-1}$ if $m=1$
 (in fact, if $m>1$, then $f$ is defined on $e$; no need of taking an extension).
Clearly, $e'$ contains the impressions of $h_f(q_n)$ for infinitely many $n$.
Hence, $h_f(q_n)$ are supported by the co-oriented line $L'\supset e'$ parallel to $L$.
Since $h_f(q_n)\to h_f(q)$, the nest $h_f(q)$ is strongly supported by $L'$.
Thus, $\Kc_L$ is $h_f$-invariant.

Since $\Kc$ is perfect, any nest $q\in\Kc$ is a limit of $q_n\in\Kc$ with $q_n\ne q$.
Passing to a subsequence, assume that the supports of all $q_n$
 lie in the same compact edge $e$ of $\Gamma_{k_*}$;
 we may also assume that the pieces of each $q_n$ of sufficiently large level are  on the same side of $e$.
Then $q\in\Kc_L$, where $L$ is the line containing $e$ and co-oriented so that
 the positive side of $e$ contains pieces of each $q_n$ of sufficiently large level.
We see that $\Kc=\bigcup_L \Kc_L$, where the union is, say, over all lines $L$
 through the origin that are parallel to edges of $\Gamma_{k_*}$.

Finally, consider $\Kc_{L_1}\cap \Kc_{L_2}$, for two co-oriented lines $L_1$, $L_2$
 such that $L_1\cap L_2$ is a singleton.
This is a compact $h_f$-invariant set without periodic points.
On the other hand, the impression of any $q\in\Kc_{L_1}\cap\Kc_{L_2}$ is
 necessarily a vertex of $\Gamma_{k_*}$ incident to edges parallel to $L_1$ and $L_2$.
But there are only finitely many such vertices.
A contradiction with Lemma \ref{l:imp-fin}.
Hence, $\Kc_{L_1}\cap \Kc_{L_2}=\0$, as claimed.
\end{proof}

\subsection{An extension of $f$}
\label{ss:fL}
Fix a co-oriented line $L$ containing an edge of $\Gamma^+_1$
 (recall that $\Gamma^+_1$ is defined as the finite graph $\ol\Delta\sm\dom(f)$; see Definition \ref{d:pie}).
Denote by $f_L$ a special extension of $f$ defined as follows.
On $\dom(f)$, set $f_L=f$.
Given an edge $e$ of $\Gamma^+_1$ parallel to $L$ (not including the endpoints of $e$),
 define $f_L$ on $e$ as the limit of $f$ from the positive side of $e$,
 where the positive side is determined by the co-orientation of $L$.
Thus, $f_L$ is defined on $\dom(f)$ and on all edges of $\Gamma^+_1$ parallel to $L$.
Note that the latter edges are finitely many parallel straight line intervals,
 and $f_L$ acts as a translation scissors congruence on the closure of the union $\Gamma_{L}$ of these intervals
 (however, $\Gamma_L$ does not have to be mapped to itself by $f_L$).
Define the set $K_L$ as the union of the impressions of all nests from $\Kc_L$.

\begin{lem}
  \label{l:fLonJ*}
The map $f_L$ is defined at all points of $K_L$ except possibly at finitely many vertices of $\Gamma_{L}$,
 and $K_{L}$ is $f_L$-invariant.
Moreover, all $q\in\Kc_L$ have singleton impressions, and the map $q\mapsto \imp(q)$
 defines a finite-to-one semi-conjugacy between $h_f:\Kc_L\to\Kc_L$ and $f_L:K_L\to K_L$.
Points of $K_L$ are non-wandering and non-periodic under $f_L$.
\end{lem}

\begin{proof}
Any point $z\in K_L$ lies in the impression of some nest $q\in\Kc_L$
 supported by a line through $z$ parallel to $L$.
The impression of the image $q'=h_f(q)$ of this nest under $h_f$
 contains the point $f_L(z)$ provided that $f_L$ is defined on $z$.
It follows that $f_L(z)\in K_L$.
Note, finally, that $f_L$ can be undefined at $z$ only if $z$ is one of the
 finitely many vertices of $\Gamma_{L}$.

Recall that $K_L$ lies in a finite union of intervals, namely, $\Gamma_L$.
As a consequence, the impression of every nest from $\Kc_L$ is a singleton:
 it cannot be periodic since there are no periodic points in $\Kc$,
 and it cannot be a nondegenerate non-periodic interval since $f_L$ preserves lengths,
 while the total length of $\Gamma_L$ is finite.
Hence, mapping every nest from $\Kc_L$ to its impression, one obtains a semi-conjugacy
 between the restriction of $h_f$ to $\Kc_L$ and the restriction of $f_L$ to $K_L$;
 this mapping is finite-to-one, by Lemma \ref{l:imp-fin}.
Now, the fact that points of $K_L$ are non-wandering and non-periodic
 under the iterates of $f_L$ follows from the corresponding properties of $h_f$.
\end{proof}

Assume $K_L\ne\0$ in what follows.
Observe that $K_L$ is a perfect set, since so is $\Kc_L$ (see Theorem \ref{t:wrp}).
We think of $L$ as being a horizontal line, and will refer to the positive side of $L$ as the upper side.
Consider the union $I_L$ of all edges of $\Gamma_L$ that contain points of $K_L$ inside,
 i.e., not only as endpoints.
Then $f_L$ acts on $I_L$ as a translation scissor congruence and takes $I_L$ into $\Gamma_L$.
It can be extended to an interval exchange transformation $\varphi_L$ of $\Gamma_L$.
Note that the dynamics of $\varphi_L$ and $f_L$ must match on $K_L$ but may be different elsewhere.
We need some general properties of interval exchange maps.

\begin{thm}[\cite{KH95,NPT13}]
\label{t:KH}
Let $\varphi$ be an interval exchange transformation of an interval (or a finite union of intervals) $I$.
There exists a finite set of open $\varphi$-invariant subsets $I_1$, $\dots$, $I_m\subset I$ with the following properties:
\begin{itemize}
  \item Each $I_i$ is a finite union of intervals;
  \item Each $I_i$ is either a \emph{periodic component}, i.e., $I_i$ is a cycle of intervals
  not properly contained in a cycle of bigger intervals,
   or a \emph{minimal component}, i.e., all infinite forward or backward orbits in $I_i$ are dense.
\end{itemize}
Also, all endpoints of all $I_i$ are boundary points of $\varphi$, and
 $m$ is bounded above by the number of intervals being exchanged by $\varphi$.
\end{thm}

As always, $\varphi$-invariance of a subset $A\subset I$ is understood as $\varphi(A\cap\dom(\varphi))\subset A$,
 which is reasonable since $A\sm\dom(\varphi)$ is finite.
See Theorem 14.5.13 in \cite{KH95} and Theorem A of \cite{NPT13} for a sharper upper bound on $m$.
For our purposes, results of \cite{KH95} are enough.
By Lemma \ref{l:fLonJ*} and Theorem \ref{t:KH}, the set $K_L$ consists of
 one or several minimal components of $\varphi_L$.
In particular, it is a finite union of intervals on which $f_L$ acts as an interval exchange transformation.

Recall that elementary sectors were introduced in Definition \ref{d:elem-sec}.

\begin{dfn}[Forward and backward trains]
\label{d:trains}
Let $\eps>0$ be a sufficiently small real number, in particular, smaller than the
 distance between any two distinct lines containing components of $K_L$.
A \emph{forward trapezoid} is defined as the trapezoid with horizontal bases
 such that the bottom base is a component of $K_L\cap \dom(f)$, the height is $\eps$, and,
 near the endpoints of its bottom base, the trapezoid coincides with elementary sectors.
When $\eps$ needs to be indicated, we speak of forward $\eps$-trapezoids.
Backward trapezoids are defined in the same way, replacing 
 components of $K_L\cap\dom(f)$ with components of $K_L\cap\dom(f^{-1})$.
The union of the forward trapezoids is called the \emph{forward $\eps$-train of $K_{L}$},
 or just the \emph{forward train} of $K_{L}$ if $\eps$ is fixed.
\emph{Backward trains} are defined similarly.
\end{dfn}

See Fig. \ref{fig:hor-PET} (top) for a hypothetical situation
 where a forward train does not match the corresponding backward train
 (this situation is \emph{not} unbranched).

\begin{lem}
\label{l:trains}
The map $f_L$ defines a translation scissors congruence from the forward train of $K_L$ to the backward train.
Since $f$ is unbranched, both forward and the backward trains are subdivisions of the same polygon;
 thus, $f_L$ defines a polygon exchange transformation of this polygon.
\end{lem}

\begin{proof}
Note that the $\eps$-trains of $K_L$, both forward and backward, are contained in
 an arbitrarily small neighborhood of $K_L$, provided that $\eps$ is small enough
 for the chosen neighborhood.
The set theoretic difference between the backward train and the forward train,
 with the same $\eps$, consists of several triangles (possibly degenerate, i.e., line segments)
 cut out of the union of the two trains by $\Gamma^+_1$.
Similarly, the forward train minus the backward train is a union of finitely many
 triangles cut out of the union of the two trains by $\Gamma^-_{1}$;
 see Fig. \ref{fig:hor-PET}, top.

If $f$ is unbranched, then there can be no non-degenerate triangles cut out by $\Gamma^\pm_1$.
Therefore, the forward and the backward trains are subdivisions of the same polygon,
 and $f$ acts on this polygon as a polygon exchange transformation.
Moreover, the restriction of this polygon exchange transformation to every component of
 its domain is a parallel translation by a vertor parallel to $L$, that is, by a horizontal vector.
\end{proof}

In the proof of Theorem \ref{t:rem-core}, the following property of the $\eps$-trains will be needed.

\begin{lem}
  \label{l:Jc*intrain}
Suppose that $q\in\Kc$ and $\eps>0$.
There is a co-oriented line $L$ parallel to an edge of $\Gamma_1$ and such that,
 for all sufficiently large $n$, the piece $q(n)$ is contained in the $\eps$-train of $K_L$.
\end{lem}

\begin{proof}
Replacing $q$ with $h_f^k(q)$, for some $k\in\Z$, and using $h_f$-invariance of $\Kc$,
 we may assume that the impression of $q$ is not an endpoint of $K_L$.
By Lemma \ref{l:decKc}, there is a unique (up to parallel translations) $L$ such that the nest $q$ belongs to $\Kc_L$.
It follows, in particular, that $q$ is supported by a parallel translate $L'$ of $L$.
We have $\imp(q)\subset L'$.
Since $K_L$ is a finite union of nondegenerate intervals bounded by vertices of $\Gamma_{n_0}$,
 for some integer $n_0>0$ (see the last part of Theorem \ref{t:KH}),
 it follows that $q(n)\cap L'\subset K_L$ for $n\ge n_0$.
Hence, $q(n)$ is contained in the $\eps$-train of $K_L$.
\end{proof}

\subsection{Horizontal polygon exchange transformations}
\label{ss:h-pet}
Fix a co-oriented line $L$ and, as before, refer to its direction as the horizontal direction,
 and to its positive side as the upper side.
Suppose that a polygon exchange transformation $f_\Theta$ of a bounded polygon $\Theta$
 translates all components of $\dom(f_\Theta)$ by vectors parallel to $L$.
In this case, $f_\Theta$ is called a \emph{horizontal polygon exchange transformation} with respect to $L$.
See Fig. \ref{fig:hor-PET} for a non-trivial example.
Our standing assumption in Section \ref{ss:h-pet} is that $f_\Theta$ is a horizontal
 polygon exchange transformation with respect to $L$.
There are several special properties of $f_\Theta$.
Any horizontal line $L_t$ parallel to $L$ intersects $\Theta$ over a finite union of horizontal intervals.
Note that $f_\Theta$ induces an interval exchange transformation of $\Theta\cap L_t$.
Here, the parameter $t$ can be understood, e.g., as a height over $L=L_0$,
 and we write $\varphi_t$ for the interval exchange transformation of $\Theta\cap L_t$ induced by $f_\Theta$.

\begin{figure}
  \centering
  \includegraphics[width=\textwidth]{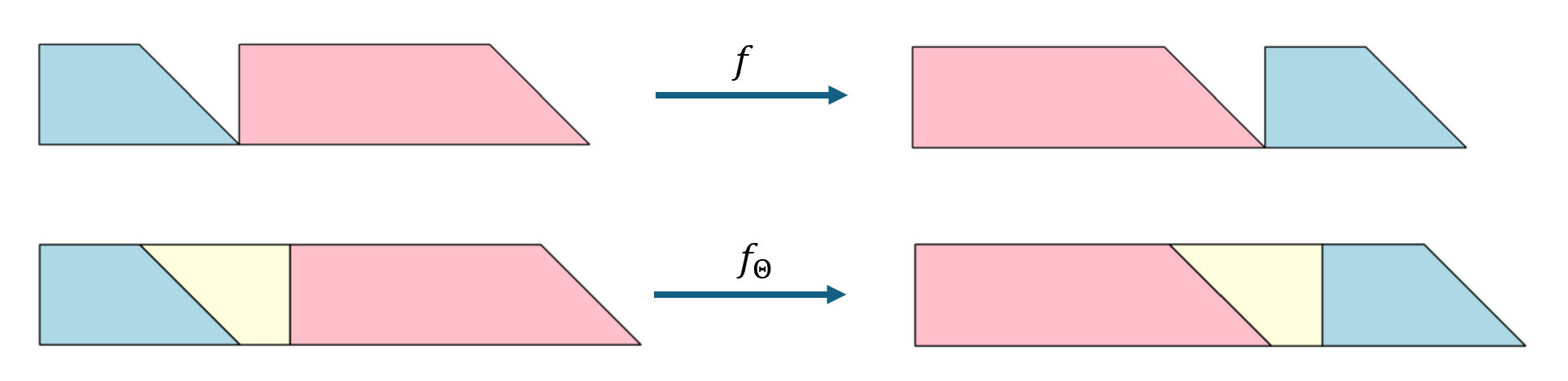}
  \caption{\small
  Top: a hypothetical picture where a forward train and a backward train
  do not match; such behavior is impossible in an unbranched case.
  Bottom: a non-trivial horizontal polygon exchange transformation.
  }\label{fig:hor-PET}
\end{figure}

\begin{thm}
\label{t:hor-all0}
All dynamic Hadwiger invariants of a horizontal polygon exchange transformation $f_\Theta$ are zero.
\end{thm}

Postponing the proof of Theorem \ref{t:hor-all0} until Section \ref{ss:dyn-hor},
 we now deduce Theorem \ref{t:rem-core},
 hence also the Main Theorem, from Theorem \ref{t:hor-all0}.

\begin{proof}[Proof of Theorem \ref{t:rem-core} using Theorem \ref{t:hor-all0}]
We use the notation introduced in the statement of Theorem \ref{t:rem-core}.
The perfect invariant subset $\Kc\subset\Jc_f$ can be represented as the disjoint union of $\Kc_L$
 over a finite set of lines $L$; consider the corresponding perfect compacta $K_L\subset \ol\Delta$.
Choose a sufficiently small $\eps>0$, so that the $\eps$-trains of $K_L$ for various $L$ are pairwise disjoint
 (this is possible to arrange since $\Kc_L$s are disjoint).
Set $\Delta_0$ to be the union of all these trains.

Having defined a polygon $\Delta_0$, we just need to verify properties (1) -- (3) stated in the theorem.
Property (3) follows from Theorem \ref{t:hor-all0}, property (2) is contained in Lemma \ref{l:trains},
 and, finally, property (1) is an immediate consequence of Lemma \ref{l:Jc*intrain}.
\end{proof}

\subsection{Dynamic invariants vanish for horizontal exchanges}
\label{ss:dyn-hor}
Below, we prove Theorem \ref{t:hor-all0}.
Start with an important special case.

\begin{thm}
\label{t:hor-0}
If $f_\Theta$ is a horizontal polygon exchange transformation with respect to $L$, then $\Inv_L(f_\Theta)=0$.
\end{thm}

Before giving a proof of Theorem \ref{t:hor-0}, we make several preparatory remarks.
There are finitely many lines $L_{t_0}$, $\dots$, $L_{t_m}$ such that the vertices
 of all components of $\dom(f_\Theta)$ are contained in the union of these lines;
 recall that $t_0$, $\dots$, $t_m$ are the corresponding heights.
Assume that the set of lines $L_{t_i}$ with the just mentioned property is chosen
 to be minimal by inclusion.
In this case, the part $\Theta_i$ of $\Theta$ between $L_{t_i}$ and $L_{t_{i+1}}$
 is a union of trapezoids with horizontal edges in $L_{t_i}$ and $L_{t_{i+1}}$.
Some of these edges may  degenerate to points, letting the corresponding
 trepezoids degenerate to triangles.
Call the thus obtained trapezoids between $L_{t_i}$ and $L_{t_{i+1}}$ the
 \emph{trapezoids on the $i$-th floor}.
Observe that $f_\Theta$ acts as a horizontal polygon exchange transformation on $\Theta_i$.
Knowing that the dynamic invariant $\Inv_L$ is additive with respect to
 polygonal subdivisions of the domain, it suffices to prove Theorem \ref{t:hor-0}
 for the union of all trapezoids on a given floor.
Moreover, if we consider the first return map to any given trapezoid and
 then subdivide it further into floors if necessary, then we reduce Theorem \ref{t:hor-0}
 to the case when $\Theta$ is just an \emph{$L$-trapezoid}, i.e., a trapezoid with top and bottom edges,
 possibly degenerate, parallel to $L$.
We therefore assume now that $\Theta$ is an $L$-trapezoid.

Under the assumptions made, the dynamics of $f_\Theta$ reduces to that of interval exchange maps.
Rescale the parameter $t$ so that the bottom and the top edges of $\Theta$ lie in $L_0$ and $L_1$, respectively.
The family of interval exchange transformations $\varphi_t$, $t\in [0;1]$, is linear
 in the following sense: $\varphi_t$ translates the intervals $I_1(t)$, $\dots$, $I_m(t)$
 by the same translation 1D vectors $\tau_1$, $\dots$, $\tau_m$ (independent of $t$),
 and the length $\la_i(t)$ of $I_i(t)$ depends linearly on $t\in [0;1]$ for all $i=1$, $\dots$, $m$.
It is possible that $\la_i(0)=0$ or $\la_i(1)=0$ but $\la_i(t)>0$ for all $t\in (0;1)$.
A family $\varphi_t$ of interval exchange transformations as just described will be
 referred to as a \emph{linear one parameter family of interval exchange transformations}.
Next, we recall some basic data associated with interval exchange transformations.

\begin{dfn}[Data associated with IETs]
Consider an interval exchange transformation $\varphi$ of some interval $I$.
Write $I_1$, $\dots$, $I_m$ for the components of $\dom(\varphi)$.
Assume that they are numbered from left to right.
Let $\pi(i)$ be defined so that $\varphi(I_i)$ is the $\pi(i)$-th component of $\dom(\varphi^{-1})$ from the left;
 then $\pi=\pi_\varphi$ is a permutation of $\{1,\dots,m\}$ called the \emph{permutation of $\varphi$}.
Define the \emph{vector of lengths} of $\varphi$ as the vector $\la\in\R^m$ whose components
 are the lengths $\la_1$, $\dots$, $\la_m$ of subintervals $I_1$, $\dots$, $I_m$.
The \emph{vector of translations} of $\varphi$ is by definition the vector $\tau\in\R^m$
 formed by the numbers $\tau_1$, $\dots$, $\tau_m$ such that $\varphi|_{I_i}$ acts
 as the translation $x\mapsto x+\tau_i$.
\end{dfn}

Recall also that the vector of lengths and a permutation $\pi$ of $\{1,\dots,m\}$ determine the vector of translations
 as follows: $\tau_k=\sum_{\pi(i)<\pi(k)} \la_i-\sum_{j<k}\la_j$.
The latter linear relation can be rewritten in the matrix form as $\tau=\Omega\la$
 and in coordinates as $\tau_i=\sum_{j=1}^{m} \Omega_{ij}\la_j$, where
$$
\Omega_{ij}=\begin{cases}1,& \hbox{if } \pi(i)>\pi(j) \hbox{ and } i<j,\\
-1,& \hbox{if } \pi(i)<\pi(j)\hbox{ and } i>j,\\
0,& \hbox{in all other cases}.
\end{cases}
$$
As is seen directly from the formula just displayed, the matrix $\Omega$ is skew symmetric.

\begin{lem}
  \label{l:SAF-lopfiet}
Let $(\varphi_t)$ be a linear one parameter family of interval exchange transformations.
The SAF invariant of $\varphi_t$ is independent of $t$.
\end{lem}

\begin{proof}
Write $\la_i(t)$ as $\la_i(0)+t\la'_i(0)$ and substitute this expression into the formula
 $\sum \la_i(t)\otimes\tau_i$ for the invariant $\Inv(\varphi_t)$.
Recall that $\R\otimes\R$ can be viewed as a vector space over $\R$, with
 left multiplication by real scalars satisfying the identity $(ta)\otimes b=t(a\otimes b)$.
It follows that $\Inv(\varphi_t)$ can be written as the sum of two terms:
 the first term is independent of $t$ and is equal to $\Inv(\varphi_0)$,
 whereas the second term is $t$ times the sum of $\la'_i(0)\otimes \tau_i$
 over $i$ ranging from 1 to $m$.
Clearly, it suffices to show that this second term, equal to $\frac{d}{dt}\Inv(\varphi_t)|_{t=0}$, vanishes
 (the derivative makes sense since $\varphi_t$ takes values in a two-dimensional $\R$-vector subspace
 of $\R\otimes\R$).
Substituting the expression for $\tau$ as $\tau=\Omega \la(0)$, we obtain that
$$
\left.\frac{d}{dt}\Inv(\varphi_t)\right|_{t=0}=\sum_{i,j=1}^{m}\Omega_{ij}\, \la'_i(0)\otimes \la_j(0).
$$
Here, the factor $\Omega_{ij}$ has been moved to the front of the tensor product
 since $\Omega_{ij}\in\Q$.

On the other hand, differentiating each component of the matrix identity $\tau=\Omega\la(t)$  by $t$ and
 using that $\tau$ does not depend on $t$, we see that $\sum_j\Omega_{ij}\la'_j(0)=0$.
It remains to substitute the latter equality into the expression for $\frac{d}{dt}\Inv(\varphi_t)|_{t=0}$
 and use the skew symmetry of the matrix $\Omega$.
We obtain that $\Inv(\varphi_t)$ is constant with respect to $t$, as claimed.
\end{proof}

We are now ready to complete the proof of Theorem \ref{t:hor-0}.

\begin{proof}[Proof of Theorem \ref{t:hor-0}]
By the above, it suffices to assume that $\Theta$ is an $L$-trapezoid
 between $L_0$ and $L_1$, so that $\varphi_t$ for $t\in [0;1]$ form a
 linear one parameter family of interval exchange transformations.
Since all the translation vectors are horizontal, the space $\R^2$ of translations reduces to $\R^1$,
 where all $\tau_i(t)$ lie.
With this identification, the dynamic Hadwiger invariant of $f_\Theta$ identifies
 with the difference $\Inv(\varphi_0)-\Inv(\varphi_1)$ of two SAF invariants.
By Lemma \ref{l:SAF-lopfiet}, the latter difference is zero.
\end{proof}

\begin{proof}[Proof of Theorem \ref{t:hor-all0}]
Vanishing of $\Inv_L(f_\Theta)$ follows from Theorem \ref{t:hor-0}.
Let $L'$ be a line transverse to $L$; we want to show that $\Inv_{L'}(f_\Theta)=0$.
It suffices to assume that $\Theta$ is an $L$-trapezoid between $L_0$ and $L_1$.
Also, one may assume without loss of generality that the positive side of $L'$ is on the right of $L'$.
Write $\ell'$ for the length of the intersection of the strip between $L_0$ and $L_1$
 with any line parallel to $L'$.
Suppose that $i_1<\dots<i_k$ are all the values of the index $i$, for which
 the left end of $I_i(t)$ for all $t\in (0;1)$ lies on an edge of $\Gamma_1(f_\Theta)$ parallel to $L'$.
One also writes $j_1<\dots<j_l$ for the values of $j$ such that
 the right end of $I_j(t)$ for all $t\in (0;1)$ lies on an edge of $\Gamma_1(f_\Theta)$ parallel to $L'$.
The value $\Inv_{L'}(f_\Theta)$ is by definition equal to
$$
\ell'\otimes \left(\tau_{i_1}+\dots+\tau_{i_k}-\tau_{j_1}-\dots-\tau_{j_l}\right),
$$
 and we need to show that the second factor equals zero.

Introduce an affine coordinate system in the plane so that one of the
 coordinate axes is horizontal, i.e., parallel to $L$.
Denote by $x_i(t)$, respectively, $y_i(t)$, the horizontal coordinate of the left endpoint,
 respectively, the right endpoint, of $I_i(t)$.
Similarly, write $x'_i(t)$ and $y'_i(t)$ for the corresponding horizontal coordinates of $\varphi_t(I_i(t))$.
Note that
\begin{align*}
&x_{i_1}(t)+\dots +x_{i_k}(t)-y_{j_1}(t)-\dots -y_{j_l}(t)=\\
=\ &x'_{i_1}(t)+\dots +x'_{i_k}(t)-y'_{j_1}(t)-\dots-y'_{j_l}(t).
\end{align*}
Indeed, both in the left-hand side and in the right-hand side,
 terms corresponding to points of the edges of $\Gamma_1(f_\Theta)$
 parallel to $L'$ inside $\Theta$ cancel out,
 and only those terms which correspond to the side edges of $\Theta$ survive.
Finally, it remains to observe that $\tau_{i_r}=x'_{i_r}(t)-x_{i_r}(t)$ and
 $\tau_{j_s}=y'_{j_s}(t)-y_{j_s}(t)$, where $r=1$, $\dots$, $k$ and $s=1$, $\dots$, $l$.
\end{proof}

As we have seen earlier, Theorem \ref{t:hor-all0} implies the Main Theorem.

\section{Further perspectives}
\label{s:sum}
Our specific objective was to prove, for a regular $N$-gon $\Pi$ with $N\ne 3$, $4$, $6$, that
 the outer billiard map $f_\Pi$ has aperiodic points (the Main Theorem).
However, from a more general perspective, we also aimed at introducing tools
 for verifying non-periodicity of scissors automorphisms.
Non-periodicity can be established with the help of dynamic invariants, which are
 explicitly computable in the situation of the Main Theorem. 
A possible direction for future research is extending the Main Theorem to all non-lattice quasi-rational polygons
 (a conjecture of Gutkin and Simanyi).
Here, an additional difficulty is that it is too general for relying on closed form formulas.
General methods will be necessary for establishing the nonvanishing of dynamic invariants.

Related is a problem of better understanding what exactly nonzero dynamic invariants
 can say about dynamics, besides the lack of periodicity.
This problem is not fully resolved even in dimension one, that is,
 for interval exchange transformations.
It would be natural to first approach this problem either for special classes
 of interval exchange maps or for outer billiards on regular polygons.
Some other problems are listed below:
\begin{itemize}
\item In the setting of the Main Theorem, develop an algorithmic approach to
 finding aperiodic points.
  \item Find interesting classes of multi-dimensional polytope exchange transformations,
   for which dynamic invariants are explicitly computable.
  \item Relate polytope exchange transformations in dimension $d$
   with flows on Euclidean $(d+1)$-manifolds; describe the role of dynamic invariants
   in terms of these flows.
  \item Develop general methods for establishing (non)existence of dynamical self-similarity
   in the context of scissors automorphisms.
\end{itemize}

Finally, a significant challenge would be to extend dynamical invariants beyond
 the realm of piecewise affine dynamics, by allowing piecewise continuous maps
 that are not affine on the components of their domains.


\begin{thebibliography}{YFWC10}

\bibitem[ACP97]{ACP97}
P.~Ashwin, W.~Chambers, and G.~Petkov.
\newblock Lossless digital filter overflow oscillations; approximation of
  invariant fractals.
\newblock {\em International Journal of Bifurcation and Chaos},
  7(11):2603--2610, 1997.


\bibitem[AKT01]{AKT01}
R.~Adler, B.~Kitchens, and C.~Tresser.
\newblock Dynamics of non-ergodic piecewise affine maps of the torus.
\newblock {\em Ergodic Theory and Dynamical Systems}, 21(4):959--999, 2001.

\bibitem[Arn81]{Arn81}
P.~Arnoux.
\newblock {\'E}changes d’intervalles et flots sur les surfaces.
\newblock {\em Ergodic theory (Sem., Les Plans-sur-Bex, 1980)}, pages 5--38,
  1981.

\bibitem[Ash96]{Ash96}
P.~Ashwin.
\newblock Non-smooth invariant circles in digital overflow oscillations.
\newblock In {\em Proceedings of the 4th International Workshop on Nonlinear
  Dynamics and Electronic Systems, Sevilla}, pages 417--422, 1996.

\bibitem[BC11]{BC11}
N.~Bedaride and J.~Cassaigne.
\newblock Outer billiard outside regular polygons.
\newblock {\em Journal of the London Mathematical Society}, 84(2):303--324,
  2011.

\bibitem[CL88]{CL88}
L.O. Chua and T.~Lin.
\newblock Chaos in digital filters.
\newblock {\em IEEE Transactions on Circuits and Systems}, 35(6):648--658,
  1988.

\bibitem[CL90]{CL90}
L.O. Chua and T.~Lin.
\newblock Chaos and fractals from third-order digital filters.
\newblock {\em International journal of circuit theory and applications},
  18(3):241--255, 1990.

\bibitem[Dav92]{Dav92}
A.C. Davies.
\newblock Geometrical analysis of digital-filter overflow oscillations.
\newblock In {\em [Proceedings] 1992 IEEE International Symposium on Circuits
  and Systems}, volume~1, pages 256--259. IEEE, 1992.

\bibitem[Day47]{Day47}
M.M. Day.
\newblock Polygons circumscribed about closed convex curves.
\newblock {\em Transactions of the American Mathematical Society},
  62(2):315--319, 1947.

\bibitem[Dou82]{dou82}
R.~Douady.
\newblock {\em Applications du th{\'e}oreme des tores invariants}.
\newblock PhD thesis, Universit{\'e} Paris VII, 1982.

\bibitem[DT05]{DT05}
F.~Dogru and S.~Tabachnikov.
\newblock Dual billiards.
\newblock {\em The Mathematical Intelligencer}, 27(4):18--25, 2005.

\bibitem[Gen05]{Gen05}
D.~Genin.
\newblock {\em Regular and chaotic dynamics of outer billiards}.
\newblock The Pennsylvania State University, 2005.

\bibitem[Goe00]{Goe00}
A.~Goetz.
\newblock {Dynamics of piecewise isometries}.
\newblock {\em Illinois Journal of Mathematics}, 44(3):465 -- 478, 2000.

\bibitem[Goe03]{Goe03}
A.~Goetz.
\newblock Piecewise isometries --- an emerging area of dynamical systems.
\newblock In P.~Grabner and W.~Woess, editors, {\em Fractals in Graz 2001},
  pages 135--144, Basel, 2003. Birkh{\"a}user.

\bibitem[GS92]{GS92}
E.~Gutkin and N.~Simanyi.
\newblock Dual polygonal billiards and necklace dynamics.
\newblock {\em Communications in Mathematical Physics}, 143(3):431--449,
  January 1992.

\bibitem[Had13]{Had13}
H.~Hadwiger.
\newblock {\em Vorlesungen {\"u}ber {I}nhalt, {O}berfl{\"a}che und
  {I}soperimetrie}.
\newblock Springer-Verlag, 2013.

\bibitem[HG51]{HG51}
H.~Hadwiger and P.~Glur.
\newblock Zerlegungsgleichheit ebener {P}olygone.
\newblock {\em Elemente der Mathematik}, 6:97--106, 1951.

\bibitem[Hug12]{Hug12}
G.H. Hughes.
\newblock Outer billiards, digital filters and kicked hamiltonians.
\newblock {\em arXiv preprint arXiv:1206.5223}, 2012.

\bibitem[Hug13]{Hug13}
G.H. Hughes.
\newblock Outer billiards on regular polygons.
\newblock {\em arXiv preprint\\ arXiv:1311.6763}, 2013.

\bibitem[Hug16]{Hug16}
G.H. Hughes.
\newblock First families of regular polygons and their mutations.
\newblock {\em arXiv preprint arXiv:1612.09295}, 2016.

\bibitem[Hug21]{Hug21}
G.H. Hughes.
\newblock The edge geometry of regular polygons -- part 1.
\newblock {\em arXiv preprint arXiv:2103.06800}, 2021.

\bibitem[KF75]{KF75}
A.N. Kolmogorov and S.V. Fomin.
\newblock {\em Introductory Real Analysis}.
\newblock Dover Books on Mathematics. Dover Publications, 1975.

\bibitem[KH95]{KH95}
A.~Katok and B.~Hasselblatt.
\newblock {\em Introduction to the Modern Theory of Dynamical Systems}.
\newblock Cambridge University Press, 1995.

\bibitem[Ko{\l}89]{Kol89}
R.~Ko{\l}odziej.
\newblock The antibilliard outside a polygon.
\newblock {\em Bull. Polish Acad. Sci. Math.}, 37(1-6):163--168, 1989.

\bibitem[KWC96]{KWC96}
L.~Kocarev, C.~W. Wu, and L.~Choa.
\newblock Complex behavior in digital filters with overflow nonlinearity:
  analytical results.
\newblock {\em IEEE Transactions on Circuits and Systems II: Analog and Digital
  Signal Processing}, 43(3):234--246, 1996.

\bibitem[LV16a]{LV16a}
J.H. Lowenstein and F.~Vivaldi.
\newblock Renormalizable two-parameter piecewise isometries.
\newblock {\em Chaos: An Interdisciplinary Journal of Nonlinear Science},
  26(6), June 2016.

\bibitem[LV16b]{LV16}
J.H. Lowenstein and F.~Vivaldi.
\newblock Renormalization of one-parameter families of piecewise isometries.
\newblock {\em Dynamical Systems}, 31(4):393--465, April 2016.

\bibitem[Mos78]{Mos78}
J.~Moser.
\newblock Is the solar system stable?
\newblock {\em The Mathematical Intelligencer}, 1(2):65--71, June 1978.

\bibitem[Mos01]{mos01}
J.~Moser.
\newblock {\em Stable and random motions in dynamical systems: With special
  emphasis on celestial mechanics}, volume~1.
\newblock Princeton university press, 2001.

\bibitem[Neu59]{Neu59}
B~Neumann.
\newblock Sharing ham and eggs.
\newblock {\em Iota, Manchester University}, pages 14--18, 1959.

\bibitem[NPT13]{NPT13}
A.~Nogueira, B.~Pires, and S.~Troubetzkoy.
\newblock Orbit structure of interval exchange transformations with flip.
\newblock {\em Nonlinearity}, 26(2):525, 2013.

\bibitem[Ogo92]{Ogo92}
M.~Ogorza{\l}ek.
\newblock Complex behavior in digital filters.
\newblock {\em International Journal of Bifurcation and Chaos}, 2(01):11--29,
  1992.

\bibitem[Ruk22]{Ruk22}
Ph. Rukhovich.
\newblock Outer billiards outside regular polygons: tame case.
\newblock {\em Izvestiya: Mathematics}, 86(3):508--559, 2022.

\bibitem[Sah81]{Sah81}
C.-H. Sah.
\newblock Scissors congruences of the interval.
\newblock {\em preprint}, 1981.

\bibitem[Sch07]{Sch07}
R.E. Schwartz.
\newblock Unbounded orbits for outer billiards.
\newblock {\em Journal of Modern Dynamics}, 1:371--424, 2007.

\bibitem[Sch09]{Sch09}
R.E. Schwartz.
\newblock {\em Outer Billiards on Kites ({AM}-171)}.
\newblock Princeton University Press, December 2009.

\bibitem[Sch14]{Sch14}
R.E. Schwartz.
\newblock {\em The Octagonal {PETs}}.
\newblock American Mathematical Society, July 2014.

\bibitem[Sch19]{Sch19}
R.E. Schwartz.
\newblock {\em The Plaid Model}.
\newblock Princeton University Press, February 2019.

\bibitem[Sch21]{Sch21}
R.E. Schwartz.
\newblock Survey lecture on billiards.
\newblock \emph{Proceedings of the International Congress of Mathematicians}.
Berlin: EMS Press, 2022.

\bibitem[Tab93]{Tab93}
S.~Tabachnikov.
\newblock Dual billiards.
\newblock {\em Russian Mathematical Surveys}, 48(6):81--109, December 1993.

\bibitem[Tab95]{Tab95}
S.~Tabachnikov.
\newblock On the dual billiard problem.
\newblock {\em Advances in Mathematics}, 115(2):221--249, October 1995.

\bibitem[Tab05]{tab05}
S.~Tabachnikov.
\newblock {\em Geometry and billiards}, volume~30.
\newblock American Mathematical Soc., 2005.

\bibitem[Vor17]{Vor17}
Ya. Vorobets.
\newblock On the commutator group of the group of interval exchange
  transformations.
\newblock {\em Proceedings of the Steklov Institute of Mathematics},
  297(1):285--296, May 2017.

\bibitem[VS87]{VSh87}
F.~Vivaldi and A.~V. Shaidenko.
\newblock Global stability of a class of discontinuous dual billiards.
\newblock {\em Communications in Mathematical Physics}, 110(4):625--640,
  December 1987.

\bibitem[YFWC10]{YFW10}
R.~Yu, X.~Fu, K.~Wang, and Z.~Chen.
\newblock Dynamical behaviors and codings of invertible planar piecewise
  isometric systems.
\newblock {\em Nonlinear Analysis: Theory, Methods \& Applications},
  72(9-10):3575--3582, 2010.

\end{thebibliography}

\end{document}